\documentclass[11pt,a4paper]{article}

\usepackage[T1]{fontenc}
\usepackage[utf8]{inputenc}
\usepackage{lmodern}
\usepackage{amsmath,amssymb,amsthm}
\usepackage[margin=1in]{geometry}
\usepackage{enumitem}
\usepackage{microtype}
\usepackage{needspace}
\usepackage{cite}
\usepackage{booktabs,array}
\usepackage[hidelinks]{hyperref}

\theoremstyle{plain}
\newtheorem{theorem}{Theorem}[section]
\newtheorem{proposition}[theorem]{Proposition}
\newtheorem{lemma}[theorem]{Lemma}
\newtheorem{corollary}[theorem]{Corollary}
\theoremstyle{definition}
\newtheorem{definition}[theorem]{Definition}
\newtheorem{remark}[theorem]{Remark}
\numberwithin{equation}{section}
\DeclareMathOperator{\pos}{pos}
\DeclareMathOperator{\sgn}{sgn}
\newcommand{\KT}{\mathrm{KT}}
\newcommand{\QQ}{\mathbb{Q}}
\newcommand{\one}{\mathbf{1}}
\newcommand{\zero}{\mathbf{0}}
\setlist[enumerate]{label=\textup{(\roman*)},leftmargin=*,itemsep=2pt,topsep=3pt}
\AddToHook{env/theorem/before}{\Needspace{7\baselineskip}}
\title{The Kelly--Trotter conjecture and dimension of poset products\thanks{This work is supported by the
National Natural Science Foundation of China (Grant nos.\ 12471438,
12331016) and the Fundamental Research Funds for the Central Universities
(Grant no.\ GK202501014).}}
\author{Zhaochen Dong\qquad Kaiyun Wang\thanks{Corresponding author.}\\[0.5em]
\small School of Mathematics and Statistics, Shaanxi Normal University\\
\small Xi'an 710119, Shaanxi, P.\ R.\ China\\[0.2em]
\small\texttt{dzc0419@gmail.com}\qquad\texttt{wangkaiyun@snnu.edu.cn}}
\date{}
\hypersetup{
  pdftitle={The Kelly--Trotter conjecture and dimension of poset products},
  pdfauthor={Zhaochen Dong and Kaiyun Wang},
  pdfsubject={Order dimension and Cartesian products of posets},
  pdfkeywords={poset dimension, Cartesian product, covering, incidence poset, linear orders}
}

\begin{document}

\maketitle

\begin{abstract}
We study the order dimension of Cartesian products of finite posets.
Kelly and Trotter conjectured in 1982 that $\dim(P\times Q)\ge\dim P+\dim Q-2$ for all finite posets $P$ and $Q$.
For $m\ge3$, let $R_m$ denote the incidence poset of the complete graph on $m$ vertices.
We prove that there is a constant $C$ such that, for all sufficiently large $m$,
\[
\dim(R_m\times R_m)
\le
\left(1+\frac{2}{\log_2 6}\right)\dim R_m+C.
\]
Since $1+2/\log_2 6<2$, this disproves the Kelly--Trotter conjecture and shows that $2\dim R_m-2-\dim(R_m\times R_m)$
can grow linearly with $\dim R_m$.
For every integer $d\ge8$, we construct an incidence poset $Q_d$ such that $\dim Q_d=d$ and $Q_d$ has the $(3,d)$-covering property.
Consequently,
\[
\dim(P\times Q_d)\le\dim P+d-3
\]
for every poset $P$ with $\dim P\ge3$.
Thus, for every integer $d\ge8$, the poset $Q_d$ violates the Kelly--Trotter conjecture with every poset of dimension at least $3$.
Finally, we prove that a poset $Q$ has an $(r,s)$-covering if and only if $\dim(Q\times C^r)\le s$ for every finite chain $C$ with at least two elements.

\medskip
\noindent\textit{2020 Mathematics Subject Classification.}
06A07.

\smallskip
\noindent\textit{Keywords.}
Order dimension, Cartesian product, Kelly--Trotter conjecture, covering property, incidence poset.
\end{abstract}

\section{Introduction}\label{sec:intro}

Order dimension is a fundamental measure of the complexity of partially ordered sets (\emph{posets}, for short).
Let $P=(X,\le_P)$ be a poset.
A \emph{linear extension} of $P$ is a linear order $L$ on $X$ such that $x\le_P y$ implies $x\le_L y$.
A \emph{realizer} of $P$ is a family $L_1,\ldots,L_d$ of linear extensions of $P$ such that
\[
x\le_P y
\quad\Longleftrightarrow\quad
x\le_{L_i}y\ \text{for every }i\in\{1,\ldots,d\}.
\]
The dimension $\dim P$ is the least positive integer $d$ for which $P$ has a realizer of size $d$.
Since it was introduced by Dushnik and Miller~\cite{DM}, dimension has become a central topic in the theory of posets and has developed connections with many areas of combinatorics; see, for example,~\cite{Schnyder,ScottWood,KozikMicekTrotter,Joret,Blake1,Blake2,KnauerTrotter}.

Unless stated otherwise, all posets are finite.
For posets $P$ and $Q$, the following bounds belong to the folklore of dimension theory:
\[
\max\{\dim P,\dim Q\}
\le
\dim(P\times Q)
\le
\dim P+\dim Q.
\]
The upper bound is attained when $P$ and $Q$ are nontrivial chains, but equality need not hold for arbitrary posets.
This raises the question of how large the gap between $\dim P+\dim Q$ and $\dim(P\times Q)$ can be.
In 1982, Kelly and Trotter~\cite[Problem~1.4]{KT} conjectured that
\begin{equation*}\tag{KT}\label{eq:KT}
\dim(P\times Q)\ge\dim P+\dim Q-2
\end{equation*}
for all posets $P$ and $Q$.
For positive integers $n$ and $m$, let $\KT(n,m)$ denote the assertion that~\eqref{eq:KT} holds for every pair of posets $P,Q$ with $\dim P=n$ and $\dim Q=m$.
We have $\KT(n,m)\Longleftrightarrow\KT(m,n)$, and $\KT(n,m)$ holds whenever $\min\{n,m\}\le2$.

In unpublished work, Baker~\cite{Baker} proved that $\dim(P\times Q)=\dim P+\dim Q$ when both factors have distinct least and greatest elements.
Trotter~\cite{Trotter} proved that, for every integer $n\ge3$, there is an $n$-dimensional poset $P$ such that $\dim(P\times P)=2n-2$.
Reuter~\cite{Reuter} proved that $\dim(P\times P)\ge4$ whenever $\dim P=3$.
Felsner, M\"utze, and Wittmann~\cite{FMW} characterized generalized fences $F$ by the condition $\dim(P\times F)\le\dim P+1$ for every poset $P$ and reformulated Reuter's covering property for posets.
They also reduced $\KT(3,3)$ to products of $3$-irreducible posets and settled the cases in which both factors are crowns or neither factor is a crown.
In our earlier work~\cite{DW}, we settled the remaining case, in which exactly one factor is a crown, and thereby proved $\KT(3,3)$.
Bergman~\cite{Bergman1,Bergman2} further studied dimensions of poset products and several questions related to the conjecture~\eqref{eq:KT}.
Despite this progress, the conjecture~\eqref{eq:KT} has remained open for more than four decades.

In this paper, we disprove~\eqref{eq:KT}.
For $m\ge3$, let $R_m$ denote the incidence poset of the complete graph on $m$ vertices.
Our first result is the following.

\begingroup
\renewcommand{\thetheorem}{\ref*{thm:main}}
\begin{theorem}
There is a constant $C$ such that, for all sufficiently large integers $m$,
\[
\dim(R_m\times R_m)\le\left(1+\frac{2}{\log_2 6}\right)\dim R_m+C.
\]
In particular, $\KT(\dim R_m,\dim R_m)$ fails for all sufficiently large $m$.
\end{theorem}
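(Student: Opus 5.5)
The plan is to reduce everything to a statement about families of linear orders on the vertex set $[m]$, and then to build a realizer of $R_m\times R_m$ from two parts. The first part is a ``diagonal'' family of about $\dim R_m$ orders that serves both coordinates at once. The second part is a short ``correction'' family of about $\frac{2}{\log_2 6}\dim R_m$ orders.

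\textbf{Step 1: the base family.} I will use the standard characterization (Dushnik--Miller, Spencer): $\dim R_m\le d$ if and only if there are linear orders $\sigma_1,\dots,\sigma_d$ on $[m]$ such that for every triple of distinct vertices $x,y,z$ some $\sigma_i$ puts $x$ above both $y$ and $z$. In such a family a vertex is reversed over an edge precisely when it sits above both endpoints. Together with the asymptotics $\dim R_m=\log_2\log_2 m+\Theta(\log_2\log_2\log_2 m)$, this says that $\log_2 m$ is roughly $2^{d}$ up to lower-order factors. I will fix $d=\dim R_m$ and an optimal family $\sigma_1,\dots,\sigma_d$.

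\textbf{Step 2: the diagonal orders and the residual pairs.} The critical pairs of $R_m\times R_m$ that must be reversed are $((u,b),(e,b'))$ with $u\not\le e$ in the first coordinate, together with the symmetric ones. For each $i$, I will form a linear extension $M_i$ of the product by applying $\sigma_i$ simultaneously in both coordinates, for instance lexicographically or through a suitable shuffle. Each $M_i$ then reverses every pair that is ``won'' by $\sigma_i$ in the first coordinate and also those won in the second coordinate, provided the other coordinate does not interfere. I then need to describe the residual set of pairs that none of $M_1,\dots,M_d$ reverses. I expect these to be exactly the pairs where the first-coordinate witness index and the second-coordinate witness index conflict because of the relative position of the companion coordinate. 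Call these the unresolved configurations.

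\textbf{Step 3: the correction family (main obstacle).} The hard step is to show that the unresolved configurations can be reversed by $t$ extra linear extensions with $t\le \frac{2}{\log_2 6}d+C$. My approach is to encode each residual obstruction by a ``type'' from a set of size roughly $2^{d}\approx\log_2 m$. This type records, for the relevant vertices, which of the $d$ base orders they win. The task then becomes a scrambling or separation problem on these types. I will try to construct the extra orders in pairs. A pair of linear orders on two coordinates can realize the $3!=6$ relative patterns of a small configuration, so $t$ extra orders separate about $6^{t/2}$ types. Requiring $6^{t/2}\ge 2^{d}$, up to polynomial factors absorbed into $C$, gives $t\approx 2d/\log_2 6$. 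To make this rigorous I would first try a product construction: pair up the $t$ new orders, let each pair encode one base-$6$ digit of the type, and check by a case analysis on the $6$ patterns that every residual pair is reversed. A probabilistic or counting argument is the fallback if the explicit digit encoding fails on some configurations.

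\textbf{Step 4: conclusion.} Combining the two families gives
\[
\dim(R_m\times R_m)\le d+\frac{2}{\log_2 6}\,d+C .
\]
Because $1+2/\log_2 6<2$, for large $m$ the right-hand side is less than $2\dim R_m-2$, so $\KT(\dim R_m,\dim R_m)$ fails.
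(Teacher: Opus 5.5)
Your proposal has a genuine gap. Everything that carries the theorem is in Step~3, and Step~3 is a heuristic rather than a proof.

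\begin{itemize}
\item You never specify the orders $M_i$ (``lexicographically or through a suitable shuffle'') and never determine the residual pairs.
\item The bound $t\le\frac{2}{\log_2 6}d+C$ rests only on the count ``a pair of orders realizes $3!=6$ patterns, so $t$ orders separate $6^{t/2}$ types''. Nothing shows that reversing the residual pairs amounts to separating about $2^d$ types, or that pairs of orders can act as independent base-$6$ digits.
\item Step~2 as stated gives no saving. A lexicographic $M_i$ with the first coordinate primary never reverses a pair $(p,q),(p',q')$ with $p<p'$ and $q\not\le q'$. So essentially all pairs that are incomparable because of the second coordinate stay residual, and you are back at the trivial bound $2d$. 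Some extra structure is needed to let a single order work on both coordinates, and you have not found any in an arbitrary optimal family $\sigma_1,\ldots,\sigma_d$.
\item Polynomial factors cannot be ``absorbed into $C$''. Losing a factor $d^{c}$ in a requirement of the form $6^{t/2}\ge\log_2 m$ costs $\Theta(\log d)$ additional orders, not $O(1)$.
\end{itemize}

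The missing idea is Reuter's covering property. By Proposition~\ref{prop:cover-bound}, an $(r,s)$-covering of $R_m$ with $r\le d$ gives $\dim(R_m\times R_m)\le d+s-r$.
\begin{itemize}
\item Each of the $s$ covering orders of the second factor is lifted to the product by keeping its blocks in order. Inside a block of color $j$ (a vertex together with the edges joining it to vertices below it), elements are compared first by the $j$th order of an optimal realizer of the first factor. This is exactly how one order does double duty.
\item The remaining $d-r$ orders of that realizer are used lexicographically.
\end{itemize}
The paper then builds an $(r,3r)$-covering via Lemma~\ref{lem:zero-one-matrix}:
\begin{itemize}
\item the vertices get distinct codes in $\{0,1\}^{\ell}$, so $\ell\ge\log_2 m$ is needed;
\item each row of a $3r\times\ell$ matrix defines a lexicographic order on $[m]$, and the rows form $r$ color classes of three rows each;
\item the columns are the sets $A\ni1$ of size $r+\lfloor r/2\rfloor$ meeting each triple in one or two points, so $\ell_r=\Theta(6^r/\sqrt r)$ by Proposition~\ref{prop:triple-covering}.
\end{itemize}
So the factor carrying the covering uses a specially built realizer of $3r>d$ orders, not an optimal family. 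The $6$ is $2^3-2$, the number of nonconstant column patterns on a three-row color class, not $3!$.

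To keep the error an additive constant you also need the explicit bound $\log_2 m\le(\log_2 3)\binom{d-1}{\lfloor(d-1)/2\rfloor}=O(2^d/\sqrt d)$. It comes from $\lambda(d)<\alpha(d-1)$ and Hansel's bound~\eqref{eq:hansel}. Its factor $1/\sqrt d$ offsets the $1/\sqrt r$ in $\ell_r$ because $r\le d$. The asymptotic $\dim R_m=\log_2\log_2 m+\Theta(\log_2\log_2\log_2 m)$ you quote only yields $\log_2 m\le 2^d/d^{c}$ for some unspecified $c>0$, which is too coarse for this.
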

\endgroup

Since $\dim R_m\to\infty$ as $m\to\infty$ and $1+2/\log_2 6<2$, Theorem~\ref{thm:main} shows that the deficit $2\dim R_m-2-\dim(R_m\times R_m)$ can grow linearly with $\dim R_m$.

On the other hand, $\KT(3,3)$ holds.
This raises the question of for which pairs of dimensions $\KT(n,m)$ fails.
Our second result shows that $\KT(n,d)$ fails for every $n\ge3$ and $d\ge8$.
In fact, we prove the stronger statement that, for every integer $d\ge8$, the poset $Q_d=R_{\lambda(d-1)+1}$,
where $\lambda(k)$ denotes the number of maximal intersecting families of subsets of a $k$-element set, has dimension $d$ and violates~\eqref{eq:KT} with every poset of dimension at least three.

\begingroup
\renewcommand{\thetheorem}{\ref*{thm:uniform-counterexamples}}
\begin{theorem}
For every integer $d\ge8$, the poset $Q_d$ has dimension $d$ and has the $(3,d)$-covering property.
Moreover,
\[
\dim(P\times Q_d)\le\dim P+d-3
\]
for every poset $P$ with $\dim P\ge3$.
\end{theorem}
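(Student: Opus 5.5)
The plan has three parts: compute $\dim Q_d$ from the known description of $\dim R_m$, verify the $(3,d)$-covering property by an explicit construction of linear extensions, and obtain the product bound from a product lemma for coverings. For the first part I will use the Ho\c{s}ten--Morris description of the dimension of incidence posets of complete graphs, which I take as available: $\dim R_m\le d$ if and only if $m\le\lambda(d-1)+1$. Since $\lambda$ is strictly increasing, $\lambda(d-2)+1<\lambda(d-1)+1$. Hence $Q_d=R_{\lambda(d-1)+1}$ has dimension at most $d$ but not at most $d-1$, so $\dim Q_d=d$. This step should be routine, apart from checking that the indexing convention for $\lambda$ matches the one used in the paper.

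For the product inequality I will rely on the product lemma in the style of Reuter and Felsner--M\"utze--Wittmann, presumably stated earlier in the paper: if $Q$ has the $(r,s)$-covering property and $\dim P\ge r$, then $\dim(P\times Q)\le\dim P+s-r$. The proof I have in mind is as follows. Take a realizer $M_1,\dots,M_n$ of $P$ and the covering family of $Q$. Pair $r$ of the $M_i$ with $r$ of the covering orders, forming lexicographic-type orders of $P\times Q$ that reverse incomparable pairs in both coordinates at once. The remaining $n-r$ orders of $P$ and $s-r$ orders of $Q$ are then extended to $P\times Q$ in the standard way. Thus, once the covering property is established, the ``Moreover'' clause follows immediately with $r=3$, $s=d$ and $\dim P\ge3$.

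The substantive step is to show that $Q_d$ has the $(3,d)$-covering property; I expect this to be the main obstacle. I would work with the Ho\c{s}ten--Morris realizer of $Q_d$. Its vertices are indexed by the maximal intersecting families $\mathcal F$ on $[d-1]$ together with one extra vertex. Each of the $d$ linear extensions is determined by a coordinate: one of the $d-1$ ground elements, or the special coordinate. The vertex order in the $i$th extension is governed by whether $\{i\}$, or sets containing $i$, belong to $\mathcal F$.

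My plan is to modify these $d$ extensions so that the covering condition holds for every choice of three of them. The key case is showing that each incomparable pair, whether vertex--edge or edge--edge, can be reversed in the required ``shifted'' way within any prescribed triple. For a vertex $v$ and an edge $e=\{a,b\}$ not containing $v$, the intersecting property gives a set $S$ with $S\in\mathcal F_a$ and $[d-1]\setminus S\notin\mathcal F_b$, or a similar witness. From such a witness I would read off which coordinates reverse the pair and how many do. I will try to show that every such pair is reversed in at least $d-2$ of the coordinates, so that any three coordinates contain a good one. Where this fails, I would use the freedom in placing edges between their endpoints to repair the extensions.

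The hypothesis $d\ge8$ should enter exactly here. For small $d$ there are too few maximal intersecting families, and some pair is reversed in too few coordinates; for $d\ge8$ a counting argument over the families, using the growth of $\lambda$, should supply enough witnesses. If this direct approach does not close, my fallback is the characterization from the last result: show $\dim(Q_d\times C^3)\le d$ by embedding $Q_d\times C^3$ into a larger incidence-type poset $R_{m'}$ with $m'\le\lambda(d-1)+1$, absorbing the three chain coordinates into the slack of the Ho\c{s}ten--Morris bound.
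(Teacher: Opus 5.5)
\textbf{There is a genuine gap: the $(3,d)$-covering of $Q_d$ is never constructed.} Your product inequality is fine; it is Proposition~\ref{prop:cover-bound} with $r=3$, $s=d$. The covering step, however, fails, and the route you sketch cannot work. Your key claim is that every incomparable pair is reversed in at least $d-2$ of the $d$ extensions, so that any three of them contain a good one. If that held, any three of the extensions would already form a realizer, giving $\dim Q_d\le 3$ and contradicting $\dim Q_d=d\ge8$. A ``shifted'' reversal is still a reversal, so no repair of that kind helps.

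This comes from a misreading of Definition~\ref{def:covering}. The property is not about arbitrary triples of extensions. It asks you to cut each extension into consecutive blocks coloured from $[3]$ so that every \emph{comparable} pair $x\le y$ lies in a common block of each colour in some extension, while each incomparable pair is reversed across distinct blocks. Your plan never addresses the comparable-pair condition.

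Your fallback also fails. For $m'\le\lambda(d-1)+1$, the poset $R_{m'}$ has at most $|Q_d|<|Q_d\times C^3|$ elements and height $2$, so $Q_d\times C^3$ cannot embed in it.

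\textbf{What is missing.} Lemma~\ref{lem:vertex-colors} reduces the task to finding linear orders $L_1,\dots,L_d$ on the vertex set, with colourings $c_k$, such that:
\begin{enumerate}
\item each vertex $x$ lies above any two other vertices in some $L_k$;
\item for $x\ne y$ and each colour $j$, some $L_k$ has $y<_{L_k}x$ and $c_k(x)=j$.
\end{enumerate}
The blocks are then ``$x$ together with the edges whose $L_k$-later endpoint is $x$''. One needs such a system on at least $\lambda(d-1)+1$ vertices.

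For $d\ge10$, the paper takes lexicographic orders from the $0$--$1$ matrix whose columns are indexed by $\mathcal A_d$ and whose rows are split into three colour classes (Lemma~\ref{lem:zero-one-matrix}). It then proves $2^{|\mathcal A_d|}>\lambda(d-1)$ using $\lambda(d-1)<\alpha(d-2)$, Hansel's bound and $\alpha(8)<2^{76}$ (Lemma~\ref{lem:all-d-count}). For $d=9$ and $d=8$ it gives separate explicit constructions whose sizes are checked against the exact values of $\lambda(8)$ and $\lambda(7)$. That numerical check is where $d\ge8$ enters, not a general counting of witnesses.

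\textbf{Your dimension formula is also wrong.} By Lemma~\ref{lem:dimension-Rm}, $\dim R_m\le d$ if and only if $m\le\lambda(d)$, not $m\le\lambda(d-1)+1$. The difference is large: $\lambda(8)=229\,809\,982\,112$, while $\lambda(7)+1=1\,422\,565$. Your conclusion $\dim Q_d=d$ still follows from the correct formula together with strict monotonicity of $\lambda$. But $Q_d$ is the \emph{smallest} $R_m$ of dimension $d$, not the largest. The gap between $\lambda(d-1)+1$ and $\lambda(d)$ is exactly the room that makes the additional colour constraints affordable.
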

\endgroup

As a consequence, a poset $P$ satisfies
$\dim(P\times Q)\ge\dim P+\dim Q-2$ for every poset $Q$ if and only if $\dim P\le2$.
Indeed, the assertion is immediate when $\dim P\le2$, while for $\dim P\ge3$ the poset $Q_8$ in Theorem~\ref{thm:uniform-counterexamples} provides a counterexample.
Thus $\KT(n,d)$ fails for all integers $n\ge3$ and $d\ge8$, and Table~\ref{tab:KT-status} summarizes these results and the cases not settled here.

Finally, we characterize the $(r,s)$-covering property in terms of dimensions of products with finite grids.
For an integer $n\ge1$, let $[n]=\{1,\ldots,n\}$ with its usual order, and let $[n]^r$ denote the Cartesian product of $r$ copies of $[n]$.
\begingroup
\renewcommand{\thetheorem}{\ref*{thm:characterization}}
\begin{theorem}
Let $Q$ be a poset and let $r,s$ be positive integers.
There is a computable integer $N=N(|Q|,r,s)\ge2$ such that the following conditions are equivalent:
\begin{enumerate}
\item
$Q$ has an $(r,s)$-covering;
\item
$\dim(Q\times[N]^r)\le s$;
\item
$\dim(Q\times[n]^r)\le s$ for every integer $n\ge2$;
\item
$\dim(P\times Q)\le\dim P+s-r$ for every poset $P$ with $\dim P\ge r$.
\end{enumerate}
\end{theorem}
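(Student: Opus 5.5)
The plan is to prove the cycle (i)$\Rightarrow$(iv)$\Rightarrow$(iii)$\Rightarrow$(ii)$\Rightarrow$(i), using the definition of an $(r,s)$-covering given earlier. I take that definition in the Reuter/Felsner--M\"utze--Wittmann form: a family of $s$ linear extensions of $Q$ together with $r$ distinguished ``slots''. In each slot the orders are to be combined with a reversed coordinate, and every incomparable pair of $Q$ must be reversed in a way compatible with the slots. The implication (iii)$\Rightarrow$(ii) is trivial once $N\ge2$.

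\textbf{(iv)$\Rightarrow$(iii).} For (iv)$\Rightarrow$(iii) I apply (iv) with $P=[n]^r$. For $n\ge2$ this poset has dimension exactly $r$, because it contains the standard example of order $r$ (the atoms and coatoms of $\{1,2\}^r$). So (iv) gives $\dim(Q\times[n]^r)\le r+s-r=s$.

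\textbf{(i)$\Rightarrow$(iv).} Let $P$ have a realizer $M_1,\dots,M_p$ with $p\ge r$. I build $p+s-r$ linear extensions of $P\times Q$ as follows.
\begin{itemize}
\item For the $r$ slots, pair $M_1,\dots,M_r$ with the orders of the covering assigned to those slots. Each pair is merged lexicographically, with the $P$-coordinate first in some orders and the $Q$-coordinate first in others, exactly as the covering prescribes.
\item The remaining $M_{r+1},\dots,M_p$ and the unused orders of the covering are used in the usual product fashion.
\end{itemize}
Then I check that every incomparable pair $((a,x),(b,y))$ of $P\times Q$ is reversed. If $a\not\le b$, some $M_i$ reverses it; the lexicographic merge preserves this reversal unless the $Q$-part overrides it, and that override is exactly where the covering condition is used. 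The case $x\not\le y$ is symmetric. This is a direct generalization of the known argument for Reuter's $(3,d)$-case, so I expect it to be bookkeeping.

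\textbf{(ii)$\Rightarrow$(i).} This is the main obstacle. Fix a realizer $L_1,\dots,L_s$ of $Q\times[N]^r$ and aim for a Ramsey-type stabilization.
\begin{enumerate}
\item Colour each point $v\in[N]^r$ by the $s$-tuple of restrictions of the $L_j$ to the copy $Q\times\{v\}$. There are at most $(|Q|!)^s$ colours.
\item Colour pairs, or more generally tuples, of grid points by how the $L_j$ interleave the corresponding copies of $Q$.
\item Apply a product Ramsey theorem for grids (Graham--Rothschild style, or iterated one-dimensional Ramsey along each coordinate) to find a subgrid $\{0,1\}^r$, or a somewhat larger subgrid, on which all these colours are constant.
\end{enumerate}
On such a canonical subgrid, the critical pairs $((x,\mathbf{1}_i^{c}),(y,\mathbf{1}_i))$ force certain $L_j$ to reverse the $i$-th coordinate direction. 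These critical pairs mix the $i$-th unit direction with incomparabilities of $Q$. The orders forced in this way define the $r$ slots. The remaining incomparable pairs of $Q$ must then be reversed compatibly, and this yields the $(r,s)$-covering. Since all Ramsey numbers involved have explicit bounds in terms of $|Q|$, $r$ and $s$, the resulting $N$ is computable.

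The delicate point is choosing the right colourings so that the stabilized pattern reads off literally as the covering in the paper's definition. I would first try colouring $r$-tuples of chains of length $2$ in each coordinate, and enlarge the colouring only as the definition demands.
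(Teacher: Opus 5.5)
Your cycle (i)$\Rightarrow$(iv)$\Rightarrow$(iii)$\Rightarrow$(ii)$\Rightarrow$(i) matches the paper. Your (iv)$\Rightarrow$(iii) (standard example in $\{1,2\}^r$) and (iii)$\Rightarrow$(ii) are fine.

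\textbf{The gap is (ii)$\Rightarrow$(i).} This is the whole content of the theorem, and your outline does not carry it out: you stop at ``the stabilized pattern reads off as the covering'' and say yourself that you do not know how.

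Part of the trouble is that you misdescribe the target.
\begin{itemize}
\item An $(r,s)$-covering is not a choice of $r$ ``slots'' to which orders are assigned. It is a partition of \emph{each} $L_i$ into consecutive blocks, with each block coloured by some $j\in[r]$ (equivalently: chains $C_i$, an embedding $f$, and colourings $\gamma_i$). A single $L_i$ may carry blocks of several colours.
\item The colour condition comes from \emph{comparable} pairs $x\le_Q y$: the pair $(x,e_j)\nleq(y,\one-e_j)$ must be reversed. Incomparable pairs give the distinct-blocks condition via $(x,\zero)\nleq(y,\one)$. You attach the unit-direction pairs to incomparabilities of $Q$, which has these roles reversed.
\item ``Colours constant on $\{0,1\}^r$'' is not the right canonical form. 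The paper stabilizes so that the comparison of $(x,a)$ and $(y,b)$ in $L_i$ depends only on $i,x,y,\sgn(b-a)$, on sets of size three in each coordinate. Three values per coordinate let any three points of $\QQ^r$ be realized in the subgrid, which is what makes the extended orders transitive and yields a sign-invariant realizer $\widehat L_1,\dots,\widehat L_s$ of $Q\times\QQ^r$.
\end{itemize}

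The slot picture also breaks your sketch of (i)$\Rightarrow$(iv), which is just the known result the paper cites (FMW, Theorem 21). The standard construction has one extension for each $i\in[s]$:
\begin{itemize}
\item order first by the $L_i$-block of the $Q$-coordinate;
\item inside a block of colour $j$, order by $M_j$ on $P$;
\item then order by $L_i$.
\end{itemize}
Add $M_{r+1},\dots,M_p$, each with the $P$-coordinate first. A pair with $a\nleq_P b$ reversed by $M_j$, $j\le r$, is handled by condition (ii) of the definition if $x\le_Q y$, and by condition (i) otherwise.

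\textbf{What is missing is the structural analysis that produces the blocks and their colours.} The paper proves two facts.
\begin{enumerate}
\item Under a sign-invariant linear extension, each fiber $\{x\}\times\QQ^r$ has a unique \emph{principal coordinate} $j$: $a_j<b_j$ implies $(x,a)<(x,b)$. The proof uses that the positive vectors are closed under addition, together with a linear order $\triangleleft$ on the unit vectors whose top element is principal.
\item Declare $x\sim y$ when neither fiber lies entirely before the other. This is an equivalence relation whose classes are linearly ordered. All fibers in a class $E$ share a principal coordinate $j(E)$, and $x,y\in E$, $a_{j(E)}<b_{j(E)}$ imply $(x,a)<(y,b)$.
\end{enumerate}
The classes are the blocks of $\widehat L_i$, and $j(E)$ is the block's colour. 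The two test pairs above then give the embedding and the colour condition directly. Nothing in your outline plays the role of these two facts, and without them you cannot read a block decomposition or a colouring off the stabilized realizer.
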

\endgroup

\section{Coverings and incidence posets}\label{sec:embedding}

The Cartesian product of posets $P_i=(X_i,\le_i)$, $i\in[n]$, is the poset on $X_1\times\cdots\times X_n$ ordered coordinatewise.
A \emph{grid} is a finite product of finite chains.
An \emph{order embedding} $f:P\to Q$ is a map such that $x\le_P y$ if and only if $f(x)\le_Q f(y)$.

For a linear order $L$ on a finite set $X$, define
\[
\pos_L(x)=|\{y\in X:y\le_L x\}|.
\]
If $L_1,\ldots,L_d$ is a realizer of a poset $P=(X,\le_P)$, then $x\longmapsto\bigl(\pos_{L_1}(x),\ldots,\pos_{L_d}(x)\bigr)$ is an order embedding of $P$ into a product of $d$ chains.
Conversely, an order embedding of $P$ into a product of $d$ chains implies $\dim P\le d$.

Following Felsner, M\"utze, and Wittmann~\cite[Definition~20]{FMW}, we use the following formulation of Reuter's covering property~\cite{Reuter}.

\begin{definition}\label{def:covering}
Let $r,s$ be positive integers and $P=(X,\le_P)$ be a poset.
We say that $P$ has the \emph{$(r,s)$-covering property} if $P$ admits a realizer $L_1,\ldots,L_s$ such that it is possible to break the linear extensions into consecutive blocks and color these blocks with colors from $[r]$ such that:
\begin{enumerate}
\item If $x\nleq_P y$, then there is some $L_i$ such that $y$ is before $x$ in $L_i$ and the two elements are in distinct blocks.
\item If $x\leq_P y$, then for each $j\in[r]$, there is some $L_i$ with a block of color $j$ containing $x$ and $y$.
\end{enumerate}
\end{definition}

Equivalently, $P$ has the $(r,s)$-covering property if and only if there are finite chains $C_1,\ldots,C_s$, an order embedding $f=(f_1,\ldots,f_s):P\to C_1\times\cdots\times C_s$, and colorings $\gamma_i:C_i\to[r]$, $i\in[s]$, such that for every $x\leq_P y$ and $j\in[r]$, there is some $k\in[s]$ with $f_k(x)=f_k(y)$ and $\gamma_k(f_k(x))=j$. The following proposition follows from \cite[Theorem~10']{Reuter}; see also~\cite[Theorem~21]{FMW}.

\begin{proposition}\label{prop:cover-bound}
If $Q$ has the $(r,s)$-covering property, then
\[
\dim(P\times Q)\le\max\{\dim P,r\}+s-r
\]
for every poset $P$.
\end{proposition}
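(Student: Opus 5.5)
The plan is to prove the bound directly, by building an order embedding of $P\times Q$ into a product of $\max\{\dim P,r\}+s-r$ chains. Put $n=\max\{\dim P,r\}$ and fix a realizer of $P$ of size $n$; if $\dim P<r$, pad it by repeating linear extensions. This gives an order embedding $g=(g_1,\ldots,g_n):P\to D_1\times\cdots\times D_n$ into chains. From the covering property we also have an embedding $f=(f_1,\ldots,f_s):Q\to C_1\times\cdots\times C_s$ together with colorings $\gamma_k:C_k\to[r]$. The idea is that the $r$ colors let the first $r$ coordinates of $g$ be absorbed into the $s$ coordinates of $f$, so that these $r$ coordinates cost nothing extra. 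The remaining $n-r$ coordinates $g_{r+1},\ldots,g_n$ are kept as separate chains.

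Concretely, for each $k\in[s]$ I would define a chain $E_k$ as the lexicographic product $C_k\times D_{\gamma_k}$, where for $c\in C_k$ the second factor is $D_{\gamma_k(c)}$. That is, we refine each element $c$ of $C_k$ by a copy of the chain $D_{\gamma_k(c)}$, and order first by the $C_k$-coordinate, then by the $D$-coordinate. The map is
\[
h(p,q)=\Bigl(\bigl(f_k(q),g_{\gamma_k(f_k(q))}(p)\bigr)_{k\in[s]},\,g_{r+1}(p),\ldots,g_n(p)\Bigr).
\]
It lands in a product of $s+n-r$ chains, as required.

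Next I check that $h$ is monotone. Suppose $(p,q)\le(p',q')$. Then $f_k(q)\le f_k(q')$ for every $k$. If the inequality is strict, the lexicographic coordinate increases regardless of the second entry. If $f_k(q)=f_k(q')$, both points use the same color $j$, and $g_j(p)\le g_j(p')$ holds. The trailing coordinates are clearly monotone.

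For the reflection of order, suppose $h(p,q)\le h(p',q')$. The first lexicographic entries give $f_k(q)\le f_k(q')$ for all $k$, and hence $q\le q'$ because $f$ is an embedding. It remains to show $p\le p'$, that is, $g_j(p)\le g_j(p')$ for every $j\le n$. For $j>r$ this is read off the trailing coordinates. For $j\le r$, the covering property applied to the comparable pair $q\le q'$ supplies some $k$ with $f_k(q)=f_k(q')$ and $\gamma_k(f_k(q))=j$. On that coordinate the first lexicographic entries are equal, so the $E_k$-inequality forces $g_j(p)\le g_j(p')$. Therefore $g(p)\le g(p')$ coordinatewise, and $p\le p'$ because $g$ is an embedding. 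This shows $\dim(P\times Q)\le s+n-r$, which is the claimed bound.

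The main obstacle is the reflection step for $j\le r$. This is exactly where condition (ii) of the covering is needed, and one has to set up the lexicographic refinement so that equality of the $f_k$-values exposes precisely the color-$j$ coordinate of $p$. A secondary point is the padding when $\dim P<r$, which is harmless because repeated linear extensions still form a realizer. The resulting argument is essentially Reuter's Theorem~10$'$.
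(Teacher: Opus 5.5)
Your argument is correct, but it takes a different route from the paper, which proves nothing directly here. For $\dim P\ge r$ the paper cites \cite[Theorem~21]{FMW}. For $\dim P<r$ it embeds $P$ into a grid $[N]^r$ and applies the cited case to $[N]^r\times Q$, using $\dim([N]^r)=r$ and monotonicity of dimension under subposets. You instead reprove the cited case from scratch by refining each element $c$ of $C_k$ lexicographically by $D_{\gamma_k(c)}$. The key feature is that the $E_k$-coordinate of $h$ exposes $g_j$ exactly when the two $Q$-coordinates agree and carry color $j$, and condition (ii) of the covering guarantees such a $k$ for every $j\in[r]$ in the reflection step. You then handle $\dim P<r$ by padding the realizer with repeated linear extensions. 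This is a cleaner reduction than the paper's. Your construction needs only that $g$ is an order embedding into at least $r$ chains and never uses minimality of the realizer, so it requires no facts about grids, and the two cases become one uniform construction. The paper's version gains brevity, but it defers the actual mechanism of the bound to the literature.
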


\begin{proof}
For $\dim P\ge r$, this is~\cite[Theorem~21]{FMW}.
If $\dim P<r$, choose an integer $N\ge2$ such that $P$ embeds into $[N]^r$.
Since $\dim([N]^r)=r$, we have  $\dim(P\times Q)\le\dim([N]^r\times Q)\le s$.
\end{proof}

For a set $S$ and an integer $n \ge 1$, let $\binom{S}{n}$ denote the family of all $n$-element subsets of $S$.
For $m\ge3$, recall that $R_m=\binom{[m]}1\cup\binom{[m]}2$ is ordered by inclusion.
Dushnik proved that $\dim R_m\le s$ if and only if there are linear orders $L_1,\ldots,L_s$ on $[m]$ such that for distinct $x,y,z\in[m]$, some $L_i$ satisfies $y,z<_{L_i}x$; see~\cite[Theorem~III]{Dushnik} and~\cite[Proposition~1.2]{HM}.
To capture the covering property, we add a coloring condition to Dushnik's characterization.

\begin{lemma}\label{lem:vertex-colors}
Let $m,r,s$ be positive integers with $m\ge3$.
Let $L_1,\ldots,L_s$ be linear orders on $[m]$, and let $c_i:[m]\to[r]$ be a coloring for each $i\in[s]$.
Suppose that:
\begin{enumerate}
\item\label{cond:vertex-triples} for distinct $x,y,z\in[m]$, there exists $k\in[s]$ such that $y,z<_{L_k}x$;
\item\label{cond:vertex-colors} for distinct $x,y\in[m]$ and every $j\in[r]$, there exists $k\in[s]$ such that $y<_{L_k}x$ and $c_k(x)=j$.
\end{enumerate}
Then $R_m$ has the $(r,s)$-covering property.
\end{lemma}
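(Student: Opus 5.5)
We will use the equivalent formulation of the covering property stated after Definition~\ref{def:covering}. That is, we build chains $C_1,\ldots,C_s$, an order embedding $f=(f_1,\ldots,f_s)\colon R_m\to C_1\times\cdots\times C_s$, and colorings $\gamma_k\colon C_k\to[r]$ with the required property. The natural construction is the standard Dushnik embedding. In coordinate $k$, the chain $C_k$ is a copy of $[m]$ ordered by $L_k$, each vertex $x$ is sent to its own position, and each edge $\{x,y\}$ is sent to its larger endpoint. Concretely, take $C_k=[m]$ with the usual order and set
\[
f_k(\{x\})=\pos_{L_k}(x),\qquad f_k(\{x,y\})=\max\{\pos_{L_k}(x),\pos_{L_k}(y)\},
\]
and let $\gamma_k(\pos_{L_k}(x))=c_k(x)$. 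This coloring is well defined because $\pos_{L_k}$ is a bijection $[m]\to[m]$.

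First we check that $f$ is order preserving. Each $f_k$ is monotone: a vertex $x\subseteq\{x,y\}$ has $f_k(\{x\})\le f_k(\{x,y\})$, and the only comparabilities in $R_m$ are of exactly this form.

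Next we check that $f$ reflects the order. We must show that for every incomparable ordered pair $(a,b)$ of $R_m$, some coordinate has $f_k(a)>f_k(b)$. There are three cases.
\begin{itemize}
\item Two distinct vertices $x,y$: condition~\ref{cond:vertex-triples}, applied to $x,y$ and any third vertex $z$ (possible since $m\ge3$), gives $k$ with $y<_{L_k}x$.
\item A vertex $x$ and an edge $\{y,z\}$ not containing it: condition~\ref{cond:vertex-triples} gives $y,z<_{L_k}x$, hence $f_k(\{x\})>f_k(\{y,z\})$.
\item An edge $e$ and a vertex, or two distinct edges: for $e=\{x,y\}\not\le b$, pick an endpoint $x\in e\setminus b$. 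When $b$ is a vertex, use the vertex case to put $x$ above $b$. When $b$ is an edge $\{u,v\}$ with $x\notin\{u,v\}$, use condition~\ref{cond:vertex-triples} for $(x;u,v)$, which gives $f_k(e)\ge\pos_{L_k}(x)>f_k(b)$.
\end{itemize}
Together with the monotonicity, this makes $f$ an order embedding.

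Finally we check the coloring condition for every pair $a\le b$ and every color $j$. When $a=b$, we need $\gamma_k(f_k(a))=j$ for some $k$; this is the most delicate point.
\begin{itemize}
\item For the comparable pair $\{x\}\le\{x,y\}$: condition~\ref{cond:vertex-colors} gives $k$ with $y<_{L_k}x$ and $c_k(x)=j$. Then $f_k(\{x\})=f_k(\{x,y\})=\pos_{L_k}(x)$, which has color $j$.
\item For a trivial pair $a=b$: if $a=\{x,y\}$, use the same $k$ as above. If $a=\{x\}$, apply condition~\ref{cond:vertex-colors} to $x$ and any $y\ne x$.
\end{itemize}
So every required coloring instance is handled by condition~\ref{cond:vertex-colors}. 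Whether Definition~\ref{def:covering}(ii) is meant to include $x=y$ or not, both readings are covered.

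The only real obstacle is bookkeeping. We have to make sure the edge-versus-edge incomparabilities are all separated, and that the coloring is attached to the point $\pos_{L_k}(x)$ of the chain rather than to the vertex itself. Since each chain point is the image of exactly one vertex, this attachment is consistent.
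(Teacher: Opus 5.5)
Your proof is correct and is essentially the paper's proof. It uses the same max-position embedding $f_k(X)=\max_{x\in X}\pos_{L_k}(x)$ with $\gamma_k(\pos_{L_k}(x))=c_k(x)$, gets order reflection from condition (i), and gets the coloring requirement (including the trivial pairs $X=Y$) from condition (ii). The only difference is that you split order reflection into cases, whereas the paper handles every $X\nsubseteq Y$ at once by choosing $x\in X\setminus Y$ and distinct $y,z\ne x$ with $Y\subseteq\{y,z\}$.
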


\begin{proof}
For each $i\in[s]$, define $f_i:R_m\to[m]$ by
$f_i(X)=\max_{x\in X}\pos_{L_i}(x)$ and define $\gamma_i:[m]\to[r]$ by
$\gamma_i(\pos_{L_i}(x))=c_i(x)$.
Since $\pos_{L_i}$ is a bijection, $\gamma_i$ is well defined.

If $X\subseteq Y$, then $f_i(X)\le f_i(Y)$ for every $i\in[s]$.
Conversely, suppose that $X\nsubseteq Y$, and let $x\in X\setminus Y$.
Since $Y$ has at most two elements, there are distinct $y,z\in[m]\setminus\{x\}$ such that $Y\subseteq\{y,z\}$.
By (i), there exists $k\in[s]$ such that $y,z<_{L_k}x$.
Hence $f_k(Y)<\pos_{L_k}(x)\le f_k(X)$.
Therefore
\[
X\subseteq Y
\quad\Longleftrightarrow\quad
f_i(X)\le f_i(Y)\ \text{for every }i\in[s],
\]
so $f=(f_1,\ldots,f_s):R_m\to[m]^s$ is an order embedding.

It remains to verify the covering property.
Let $X\subseteq Y$ in $R_m$ and  $j\in[r]$.
Take $x\in X$ and $y\in[m]\setminus\{x\}$ such that $Y\subseteq\{x,y\}$.
By (ii), there exists $k\in[s]$ such that $y<_{L_k}x$ and $c_k(x)=j$.
Since $x\in X\subseteq Y\subseteq\{x,y\}$, we have
$f_k(X)=\pos_{L_k}(x)=f_k(Y)$ and $\gamma_k(f_k(X))=c_k(x)=j$.
Thus $R_m$ has the $(r,s)$-covering property.
\end{proof}

For a positive integer $n$, a family $\mathcal F$ of subsets of $[n]$ is \emph{intersecting} if $A\cap B\ne\varnothing$ for all $A,B\in\mathcal F$.
Let $\lambda(n)$ denote the number of maximal intersecting families on $[n]$.
The dimension of $R_m$ is determined by $\lambda(n)$ as follows~\cite{HM,Janzer}.

\begin{lemma}\label{lem:dimension-Rm}
For every $m\ge3$, $\dim R_m=\min\{n:\lambda(n)\ge m\}$.
\end{lemma}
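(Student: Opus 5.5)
The plan is to translate Dushnik's characterization, stated just before Lemma~\ref{lem:vertex-colors}, into the language of intersecting families. Write $n=\dim R_m$ and fix linear orders $L_1,\ldots,L_n$ on $[m]$ as in that characterization. For an ordered pair of distinct vertices $x,y$, put
\[
S(x,y)=\{i\in[n]: y<_{L_i}x\}.
\]
Then $S(y,x)=[n]\setminus S(x,y)$. Dushnik's condition says exactly that, for each fixed $x$, the family $\mathcal S_x=\{S(x,y):y\ne x\}$ is intersecting. Here $m\ge3$ matters: for any $y\ne x$ there is a third vertex $z$, so $S(x,y)\cap S(x,z)\ne\varnothing$, and in particular each $S(x,y)$ is nonempty. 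The proof then has two directions: (a) $\lambda(n)\ge m$, giving $\dim R_m\ge\min\{n:\lambda(n)\ge m\}$; and (b) if $\lambda(n)\ge m$, then $\dim R_m\le n$.

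For (a), extend each $\mathcal S_x$ to a maximal intersecting family $\mathcal M_x$ on $[n]$. The key observation is that $x\mapsto\mathcal M_x$ is injective. For $x\ne y$ we have $S(x,y)\in\mathcal M_x$ and $[n]\setminus S(x,y)=S(y,x)\in\mathcal M_y$. No intersecting family contains a set together with its complement, so $\mathcal M_x\ne\mathcal M_y$. Hence $\lambda(n)\ge m$.

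For (b), take $m$ distinct maximal intersecting families $\mathcal M_1,\ldots,\mathcal M_m$ on $[n]$. I will use the standard facts that each $\mathcal M_v$ is an up-set and contains exactly one set from each complementary pair $\{A,[n]\setminus A\}$. The goal is to build linear orders $L_1,\ldots,L_n$ on $[m]$ with $S(x,y)\in\mathcal M_x$ for every ordered pair $x\ne y$. Dushnik's condition then follows, because $\mathcal M_x$ is intersecting. By the complement-pair property, this goal amounts to the following: for each unordered pair $\{x,y\}$, choose a set $A\in\mathcal M_x$ with $[n]\setminus A\in\mathcal M_y$ (such an $A$ exists because $\mathcal M_x\ne\mathcal M_y$), and let the orders realize $S(x,y)=A$. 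The real constraint is that these choices must be made coherently, so that each coordinate relation $y<_{L_i}x$, defined by $i\in S(x,y)$, is transitive.

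The natural first attempt is to view each $\mathcal M_v$ as a monotone self-dual Boolean function $g_v$ on $\{0,1\}^n$. Fix a total order $\prec$ on $2^{[n]}$, say by size and then lexicographically. For $x\ne y$, let $A_{xy}$ be the $\prec$-first set on which $g_x$ and $g_y$ differ. Then define $S(x,y)$ as $A_{xy}$ or its complement, according to which one lies in $\mathcal M_x$. I expect transitivity of the resulting coordinate relations to be the main obstacle, and this choice may need modification. If the direct argument fails, I would instead order the vertices in coordinate $i$ by comparing the vectors $\bigl(g_v(A)\bigr)_{A}$ lexicographically, with the sets $A$ listed in a fixed order that puts sets containing $i$ first. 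This gives a genuine linear order for free. It then remains to check, using monotonicity and self-duality, that the first set on which $g_x$ and $g_y$ differ can be taken as $S(x,y)$ simultaneously for all $i$. This is the point where I would lean on the constructions in~\cite{HM,Janzer} if the direct check resists.
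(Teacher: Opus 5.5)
The gap is in direction (b): you never prove that the coordinate relations $y<_{L_i}x\iff i\in S(x,y)$ are linear orders, and you defer this to~\cite{HM,Janzer}. (The paper does the same; it states this lemma with that citation and no proof.) Direction (a) is correct and complete. You correctly use $m\ge3$ to get $\varnothing\notin\mathcal S_x$, and the complement argument for injectivity of $x\mapsto\mathcal M_x$ is right.

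The missing idea is small, and your first attempt already contains it. Let $L_i$ be the lexicographic order on the membership vectors $\bigl(g_v(A)\bigr)_{A\in 2^{[n]}}$. List the coordinates in the same fixed order $\prec$ for every $i$. At coordinate $A$, order $\{0,1\}$ as $0<1$ if $i\in A$, and as $1<0$ if $i\notin A$. The vectors are pairwise distinct, and a lexicographic product of linear orders is linear, so each $L_i$ is a linear order on $[m]$ and transitivity costs nothing.

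Now let $A=A_{xy}$ be the $\prec$-first set on which $g_x$ and $g_y$ differ. If $A\in\mathcal M_x$, then $y<_{L_i}x$ if and only if $i\in A$. If $A\notin\mathcal M_x$, then $y<_{L_i}x$ if and only if $i\notin A$. So $S(x,y)$ is exactly the set you defined, and it lies in $\mathcal M_x$ by the complement-pair property. Dushnik's condition follows because $\mathcal M_x$ is intersecting. Monotonicity is not needed, and the choice of $\prec$ is irrelevant. The paper uses this same device in the proofs of Proposition~\ref{prop:nine} and Lemma~\ref{lem:eight-local}.

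Your fallback does not work: it keeps the usual order $0<1$ and instead moves the sets containing $i$ to the front. Take $n=5$, with $\prec$ ordering by size and then lexicographically. Let $\mathcal M_y=\{A:1\in A\}$ and $\mathcal M_x=\{A:|A\cap\{3,4,5\}|\ge2\}$. For each $i$, consider the first set containing $i$ on which the two families differ. It is $\{1\}$ when $i=1$, and $\{1,i\}$ when $i\ne1$, because $\{i\}$ lies in neither family. In every case this set lies in $\mathcal M_y$. Hence $x$ precedes $y$ in every $L_i$, so $S(x,y)=\varnothing$, and Dushnik's condition fails at $x$. What must depend on $i$ is the order of the two bit values, not the order of the coordinates.
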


Since $\lambda(n)\le2^{2^n}$, we have $\dim R_m\ge\log_2\log_2m$.
Thus $\dim R_m\to\infty$ as $m\to\infty$.

\section{Counterexamples to the Kelly--Trotter conjecture}\label{sec:examples}

In this section, we disprove~\eqref{eq:KT}.
More precisely, we show that
\[
2\dim R_m-2-\dim(R_m\times R_m)
\]
grows linearly with $\dim R_m$.

Let $m\ge3$ and let $d=\dim R_m$.
Suppose that $R_m$ has an $(r,s)$-covering with $r\le d$.
By Proposition~\ref{prop:cover-bound},
\[
\dim(R_m\times R_m)\le d+s-r.
\]
Thus $\KT(d,d)$ fails whenever $s-r<d-2$.

To construct such coverings, we apply Lemma~\ref{lem:vertex-colors}.
Following an idea of Spencer~\cite[p.~351]{Spencer}, we assign a distinct $0$--$1$ vector in $\{0,1\}^{\ell}$ to each element of $[m]$ and use the rows of an $s\times\ell$ $0$--$1$ matrix to define the linear orders $L_1,\ldots,L_s$.
Such an assignment is possible whenever $m\le2^\ell$.

\begin{lemma}\label{lem:zero-one-matrix}
Let $r,s,\ell$ be positive integers.
Let $B=(b_{ip})_{s\times\ell}$ be a $0$--$1$ matrix, and let $c:[s]\to[r]$ be a coloring of its rows.
Suppose that
\begin{enumerate}
\item $\{(b_{ip},b_{iq}):i\in[s]\}=\{0,1\}^2$ for all distinct $p,q\in[\ell]$;
\item $\{b_{ip}:i\in c^{-1}(j)\}=\{0,1\}$ for every $p\in[\ell]$ and $j\in[r]$.
\end{enumerate}
Then $R_m$ has the $(r,s)$-covering property for every $3\le m\le2^\ell$.
\end{lemma}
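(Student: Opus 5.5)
The plan is to deduce the statement from Lemma~\ref{lem:vertex-colors}. Following Spencer's idea, each row of $B$ will define a "twisted" lexicographic order on $\{0,1\}^\ell$, and the row colouring $c$ will be used as a constant vertex colouring for each order.

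\textbf{Construction.} Fix $m$ with $3\le m\le 2^\ell$ and choose an injection $v:[m]\to\{0,1\}^\ell$, writing $v(x)=(v_1(x),\ldots,v_\ell(x))$. For distinct $x,y\in[m]$ let $\delta(x,y)$ be the first coordinate $p$ with $v_p(x)\ne v_p(y)$. For each row $i\in[s]$ define $L_i$ on $[m]$ by
\[
y<_{L_i}x \quad\Longleftrightarrow\quad v_{\delta(x,y)}(x)=b_{i\delta(x,y)}.
\]
In words, $L_i$ is the lexicographic order in which, at coordinate $p$, the value $b_{ip}$ counts as the larger one. Since this is the lexicographic order on $\{0,1\}^\ell$ after flipping the order of each coordinate $p$ with $b_{ip}=0$, it is a genuine linear order, and injectivity of $v$ makes it a linear order on $[m]$. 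For the colourings take $c_i:[m]\to[r]$ to be the constant map with value $c(i)$.

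\textbf{Condition (ii) of Lemma~\ref{lem:vertex-colors}.} Let $x\ne y$, let $j\in[r]$, and put $p=\delta(x,y)$. By hypothesis (ii) on $B$, some row $i$ with $c(i)=j$ has $b_{ip}=v_p(x)$. Then $y<_{L_i}x$ and $c_i(x)=j$, as required.

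\textbf{Condition (i) of Lemma~\ref{lem:vertex-colors}.} Let $x,y,z$ be distinct and put $p=\delta(x,y)$ and $q=\delta(x,z)$.
\begin{itemize}
\item If $p\ne q$, hypothesis (i) on $B$ gives a row $i$ with $(b_{ip},b_{iq})=(v_p(x),v_q(x))$. Hence $y,z<_{L_i}x$.
\item If $p=q$, we only need a row $i$ with $b_{ip}=v_p(x)$. Hypothesis (ii) with any colour $j$ supplies one, since $r\ge1$. Alternatively, hypothesis (i) with some $q'\ne p$ supplies one, which is possible because $m\ge3$ forces $\ell\ge2$.
\end{itemize}

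Both conditions of Lemma~\ref{lem:vertex-colors} therefore hold, and $R_m$ has the $(r,s)$-covering property.

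\textbf{Main obstacle.} The only delicate step is checking that the twisted lexicographic rule really is a linear order, in particular that it is transitive. I would handle this through the coordinate-flip description above rather than verifying transitivity directly from the defining rule.
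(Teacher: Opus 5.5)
Your proposal is correct and matches the paper's proof essentially step for step. It uses the same row-defined lexicographic orders on an injective image of $[m]$ in $\{0,1\}^\ell$, the same constant colourings $c_i\equiv c(i)$, and the same split into $p=q$ and $p\ne q$, with hypothesis (ii) covering both the case $p=q$ and the colour condition. Your justification that each $L_i$ is linear, as a lexicographic order with the coordinates where $b_{ip}=0$ reversed, spells out a point the paper leaves implicit, and your alternative treatment of $p=q$ via hypothesis (i) and $\ell\ge2$ is also valid.
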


\begin{proof}
Let $3\le m\le2^\ell$.
Choose an injection $\varphi:[m]\to\{0,1\}^\ell$, and write $\varphi(x)=(x_1,\ldots,x_\ell)$ for $x\in[m]$.
For distinct $x,y\in[m]$, let $p(x,y)=\min\{p\in[\ell]:x_p\ne y_p\}$.
For each $i\in[s]$, let $L_i$ be the lexicographic order on $[m]$ defined by
\[
x<_{L_i}y
\quad\Longleftrightarrow\quad
x_{p(x,y)}=1-b_{i,p(x,y)}
\]
for distinct $x,y\in[m]$, and define $c_i:[m]\to[r]$ by $c_i(x)=c(i)$ for every $x\in[m]$.

Let $x,y,z\in[m]$ be distinct, and let $p=p(x,y)$ and $q=p(x,z)$.
If $p=q$, then $y_p=z_p=1-x_p$.
By (ii), applied with $j=1$, there is an $i\in c^{-1}(1)$ such that $b_{ip}=x_p$, and hence $y,z<_{L_i}x$.
If $p\ne q$, then by (i), there is an $i\in[s]$ such that $(b_{ip},b_{iq})=(x_p,x_q)$, and again $y,z<_{L_i}x$.

Let $x,y\in[m]$ be distinct, and let $j\in[r]$.
By (ii), there is an $i\in c^{-1}(j)$ such that $b_{i,p(x,y)}=x_{p(x,y)}$.
It follows from the definition of $L_i$ that $y<_{L_i}x$, and $c_i(x)=j$.

The linear orders $L_1,\ldots,L_s$ and the colorings $c_1,\ldots,c_s$ satisfy the conditions of Lemma~\ref{lem:vertex-colors}.
Thus $R_m$ has the $(r,s)$-covering property.
\end{proof}

We now apply Lemma~\ref{lem:zero-one-matrix} with $s=3r$.
For each integer $r\ge2$, define
\[
\ell_r=3^{r-1}\left(\binom{r}{\lfloor r/2\rfloor}+\binom{r-1}{\lfloor r/2\rfloor-1}\right).
\]

\begin{proposition}\label{prop:triple-covering}
For every integer $r\ge2$ and every $3\le m\le2^{\ell_r}$, the poset $R_m$ has the $(r,3r)$-covering property.
Moreover,
\[
\ell_r=\Theta\left(\frac{6^r}{\sqrt{r}}\right).
\]
\end{proposition}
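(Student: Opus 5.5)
The plan is to construct, for each $r\ge2$, a $0$--$1$ matrix $B$ of size $3r\times\ell_r$ together with a row colouring $c:[3r]\to[r]$ satisfying conditions (i) and (ii) of Lemma~\ref{lem:zero-one-matrix}. The covering statement then follows directly from that lemma. The rows will be grouped into $r$ colour classes of three rows each, the $j$th class coloured $j$.

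\textbf{Block structure of the columns.} A column is then a concatenation of $r$ vectors in $\{0,1\}^3$. Condition (ii) says exactly that each of these $r$ pieces is non-constant, so each piece must lie in the set $S=\{0,1\}^3\setminus\{000,111\}$ of size $6$. I will parametrize $S$ by pairs $(t,\sigma)\in[3]\times\{0,1\}$, setting $v(t,\sigma)=e_t\oplus\sigma\cdot111$. A column is thus encoded by a vector $t\in[3]^r$ and a sign vector $\sigma\in\{0,1\}^r$.

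\textbf{Patterns contributed by one block.} For two columns $(t,\sigma)$ and $(t',\sigma')$, I will compute the patterns that block $j$ contributes to the set $\{(b_{ip},b_{iq})\}$:
\begin{itemize}
\item if $t_j=t'_j$ and $\sigma_j=\sigma'_j$, it contributes $\{00,11\}$;
\item if $t_j=t'_j$ and $\sigma_j\ne\sigma'_j$, it contributes $\{01,10\}$;
\item if $t_j\ne t'_j$, it contributes all patterns except $(1\oplus\sigma_j,1\oplus\sigma'_j)$.
\end{itemize}
Condition (i) then becomes a purely combinatorial requirement on the chosen family $\mathcal C\subseteq[3]^r\times\{0,1\}^r$. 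For any two distinct members, the union of these block contributions must be all of $\{0,1\}^2$. For instance, two coordinates $j,j'$ with $t_j\ne t'_j$ and $t_{j'}\ne t'_{j'}$ suffice whenever their missing patterns differ.

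\textbf{Choice of the family.} I would take the $t$-part from the code $\{t\in[3]^r:\sum t_j\equiv0\pmod 3\}$, which has size $3^{r-1}$. Any two distinct codewords differ in at least two coordinates, which supplies two "mixed" blocks. The $\sigma$-part would come from a family $\mathcal F\subseteq\{0,1\}^r$ of size $\binom{r}{\lfloor r/2\rfloor}+\binom{r-1}{\lfloor r/2\rfloor-1}$. Two requirements on $\mathcal F$ must be verified:
\begin{itemize}
\item members with equal $t$ must not be equal or complementary, since complementary signs give only $\{01,10\}$;
\item members with different $t$ must avoid having all missing patterns coincide while the equal-$t$ blocks fail to compensate.
\end{itemize}
I expect $\mathcal F$ to be a Sperner-type family, such as the middle layer(s) suitably trimmed so that no two members are complementary. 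The stated count, which is the size of a maximal family of this kind, strongly suggests this.

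\textbf{Main obstacle.} Pinning down $\mathcal F$ exactly, and checking the case analysis for pairs with $t\ne t'$, is where I expect the real difficulty. It may be necessary to couple $\sigma$ with $t$ rather than take a pure product, or to adjust the code. If the product construction fails, I would first try letting $\mathcal F$ depend on $t$ through a fixed coordinate, for example normalizing by $t_r$.

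\textbf{Asymptotics.} Given the construction, the estimate is routine. Stirling's formula gives $\binom{r}{\lfloor r/2\rfloor}=\Theta(2^r/\sqrt r)$, and $\binom{r-1}{\lfloor r/2\rfloor-1}$ is within a constant factor of it. Hence $\ell_r=3^{r-1}\cdot\Theta(2^r/\sqrt r)=\Theta(6^r/\sqrt r)$.
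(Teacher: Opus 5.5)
Your reformulation is sound: the block contributions you list are correct, and the asymptotic estimate is routine. The gap is that the column family, which is the whole content of the covering claim, is never specified or checked. Moreover, the product you propose (the parity code times a family $\mathcal F$ of sign vectors) cannot reach $\ell_r$ columns for any $r\ge2$.

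To see this, identify $\sigma$ with its support and let $\overline{\mathcal F}$ be the family of complements. Pairs with equal $t$ force $\mathcal F\cap\overline{\mathcal F}=\varnothing$. The parity code contains pairs $t,t'$ differing exactly on any prescribed $D$ with $|D|\ge2$, so your own case analysis gives four further restrictions:
\begin{enumerate}
\item $\varnothing,[r]\notin\mathcal F$;
\item no $\sigma\subsetneq\sigma'$ in $\mathcal F$ with $|\sigma'\setminus\sigma|\ge2$;
\item no disjoint $\sigma,\sigma'\in\mathcal F$ with $|\sigma\cup\sigma'|\le r-2$;
\item no $\sigma,\sigma'\in\mathcal F$ with $\sigma\cup\sigma'=[r]$ and $|\sigma\cap\sigma'|\ge2$.
\end{enumerate}
These imply that $\mathcal F\cup\overline{\mathcal F}$ contains no $A\subsetneq C$ with $|C\setminus A|\ge2$, and in particular no three-element chain. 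Erd\H{o}s's extension of Sperner's theorem then gives $2|\mathcal F|\le\binom{r}{\lfloor r/2\rfloor}+\binom{r}{\lfloor r/2\rfloor+1}$. Hence $|\mathcal F|\le\binom{r}{\lfloor r/2\rfloor}$, strictly below the $\binom{r}{\lfloor r/2\rfloor}+\binom{r-1}{\lfloor r/2\rfloor-1}$ you need. For $r=2,3,4$, even the crude bound $2^{r-1}-1$ for non-constant, pairwise non-complementary sign vectors already falls short. The middle layer (odd $r$), or half of it (even $r$), does work in your product and gives $\Theta(6^r/\sqrt r)$ columns. That would suffice for Theorem~\ref{thm:main}, but not for the exact value $\ell_r$ in the statement.

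The missing idea is to couple $\sigma$ with a single coordinate of $t$ while leaving the other coordinates completely free. In your encoding, the paper takes all $(t,\sigma)$ with $|\sigma|=\lfloor r/2\rfloor$, with $t_2,\dots,t_r\in[3]$ arbitrary, and with $t_1=1$ if $\sigma_1=0$ and $t_1\in\{2,3\}$ if $\sigma_1=1$. Equivalently, the columns are the indicator vectors of the sets $A\subseteq[3r]$ with $1\in A$, $|A|=r+\lfloor r/2\rfloor$, and $1\le|A\cap I_j|\le2$ for every $j$.

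In this set language, condition (i) of Lemma~\ref{lem:zero-one-matrix} is immediate. The common element $1$ gives $(1,1)$, and equal cardinalities give $(1,0)$ and $(0,1)$. The bound $|A\cup A'|\le2(r+\lfloor r/2\rfloor)-1\le3r-1$ gives $(0,0)$. Condition (ii) is just $1\le|A\cap I_j|\le2$.

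The count $3^{r-1}\bigl(\binom{r-1}{\lfloor r/2\rfloor}+2\binom{r-1}{\lfloor r/2\rfloor-1}\bigr)=\ell_r$ shows how the formula really arises. The factor $3^{r-1}$ comes from the unrestricted $t_2,\dots,t_r$, not from a distance-two code. The second factor counts pairs $(\sigma,t_1)$, not a Sperner family of sign vectors; the coupling is exactly what prevents equal-$t$ columns from having complementary $\sigma$. Your fallback of tying $\sigma$ to a fixed coordinate of $t$ points the right way, but the proposal stops before carrying it out.
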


\begin{proof}
Let $t=\lfloor r/2\rfloor$, and let $I_j=\{3j-2,3j-1,3j\}$ for each $j\in[r]$.
Let
\[
\mathcal F_r=\{A\subseteq[3r]:1\in A,\ |A|=r+t,\text{ and }1\le|A\cap I_j|\le2\text{ for every }j\in[r]\}.
\]
For every $A\in\mathcal F_r$, we have
\[
\bigl|\{j\in[r]:|A\cap I_j|=2\}\bigr|=t.
\]
If $|A\cap I_1|=1$, then $A\cap I_1=\{1\}$, and there are $3^{r-1}\binom{r-1}{t}$ choices for $A$.
If $|A\cap I_1|=2$, then the second element of $A\cap I_1$ can be chosen in two ways, and there are $2\cdot3^{r-1}\binom{r-1}{t-1}$ choices for $A$.
Consequently,
\[
|\mathcal F_r|=3^{r-1}\left(\binom{r-1}{t}+2\binom{r-1}{t-1}\right)=3^{r-1}\left(\binom{r}{t}+\binom{r-1}{t-1}\right)=\ell_r,
\]
and thus we write $\mathcal F_r=\{A_1,\ldots,A_{\ell_r}\}$.
Define the $0$--$1$ matrix $B=(b_{ip})_{3r\times\ell_r}$ by $b_{ip}=1$ if and only if $i\in A_p$.
Define $c:[3r]\to[r]$ by $c(i)=j$ whenever $i\in I_j$.

Let $p,q\in[\ell_r]$ be distinct.
Since $1\in A_p\cap A_q$, we have $(b_{1p},b_{1q})=(1,1)$.
Since $A_p$ and $A_q$ are distinct and have the same cardinality, both $A_p\setminus A_q$ and $A_q\setminus A_p$ are nonempty.
Choose $u\in A_p\setminus A_q$ and $v\in A_q\setminus A_p$.
Then $(b_{up},b_{uq})=(1,0)$ and $(b_{vp},b_{vq})=(0,1)$.
Moreover, since $1\in A_p\cap A_q$ and $2t\le r$, we have $|A_p\cup A_q|\le2(r+t)-1\le3r-1$.
Then there exists $w\in[3r]\setminus(A_p\cup A_q)$.
By the definition of $B$, we have $(b_{wp},b_{wq})=(0,0)$.
Therefore,
\[
\{(b_{ip},b_{iq}):i\in[3r]\}=\{0,1\}^2.
\]

Let $p\in[\ell_r]$ and $j\in[r]$.
Since $1\le|A_p\cap I_j|\le2$ and $|I_j|=3$, both $A_p\cap I_j$ and $I_j\setminus A_p$ are nonempty.
By the definition of $B$ and the fact that $c^{-1}(j)=I_j$, we have
\[
\{b_{ip}:i\in c^{-1}(j)\}=\{0,1\}.
\]
Thus $B$ and $c$ satisfy the conditions of Lemma~\ref{lem:zero-one-matrix}, and so $R_m$ has the $(r,3r)$-covering property for every $3\le m\le2^{\ell_r}$.

Finally, since $\binom{r-1}{t-1}=\frac{t}{r}\binom{r}{t}$, we have
\[
\ell_r=3^{r-1}\left(1+\frac{t}{r}\right)\binom{r}{t}.
\]
Since $t=\lfloor r/2\rfloor$ and $\binom{r}{t}=\Theta(2^r/\sqrt{r})$, we obtain $\ell_r=\Theta(6^r/\sqrt{r})$.
\end{proof}

Following~\cite{BMMV}, for a positive integer $n$, let $\alpha(n)$ denote the number of antichains of the Boolean lattice $2^{[n]}$.
By~\cite[Lemma~2.2]{BMMV}, we have $\lambda(n)<\alpha(n-1)$ for every $n\ge2$.

Hansel~\cite{Hansel} proved that
\begin{equation}\label{eq:hansel}
\alpha(n)\le3^{\binom{n}{\lfloor n/2\rfloor}}
\end{equation}
for every positive integer $n$; see also~\cite{Tsai}.
By Lemma~\ref{lem:dimension-Rm}, we have $m\le\lambda(d)<\alpha(d-1)$.
Applying~\eqref{eq:hansel} with $n=d-1$ and using Stirling's formula, we obtain
\begin{equation}\label{eq:incidence-dimension-bound}
\log_2m\le(\log_2 3)\binom{d-1}{\lfloor(d-1)/2\rfloor}=O\left(\frac{2^d}{\sqrt d}\right).
\end{equation}

\begin{theorem}\label{thm:main}
There is a constant $C$ such that, for all sufficiently large integers $m$,
\[
\dim(R_m\times R_m)\le\left(1+\frac{2}{\log_2 6}\right)\dim R_m+C.
\]
In particular, $\KT(\dim R_m,\dim R_m)$ fails for all sufficiently large $m$.
\end{theorem}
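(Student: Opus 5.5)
Set $d=\dim R_m$. The plan is to pick $r$ as small as possible subject to $m\le 2^{\ell_r}$, take the $(r,3r)$-covering from Proposition~\ref{prop:triple-covering}, and bound the product with Proposition~\ref{prop:cover-bound}. This gives
\[
\dim(R_m\times R_m)\le \max\{d,r\}+3r-r=\max\{d,r\}+2r .
\]
It then remains to show that this $r$ satisfies $r\le d/\log_2 6+O(1)$. In that case $r\le d$ for large $m$, and the bound becomes $d+2r\le(1+2/\log_2 6)d+C$.

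Concretely, let $r$ be the least integer $r\ge 2$ with $\ell_r\ge\log_2 m$. Then $m\le 2^{\ell_r}$, and since $m\ge3$, Proposition~\ref{prop:triple-covering} applies and $R_m$ has the $(r,3r)$-covering property. If $r=2$ everything is bounded, so assume $r\ge3$. By minimality, $\ell_{r-1}<\log_2 m$. Combining this with~\eqref{eq:incidence-dimension-bound} and the asymptotic $\ell_{r-1}=\Theta(6^{r}/\sqrt r)$ gives
\[
c_1\frac{6^{r}}{\sqrt r}\le \ell_{r-1}<\log_2 m\le c_2\frac{2^{d}}{\sqrt d}
\]
for absolute constants $c_1,c_2>0$. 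Taking $\log_2$ yields
\[
r\log_2 6-\tfrac12\log_2 r\le d-\tfrac12\log_2 d+O(1).
\]

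The only step needing care is turning this into a linear bound without a logarithmic error. Since $\log_2 m\to\infty$, the minimal $r$ also tends to infinity, so the inequality applies for large $m$. It already gives $r\le d$ for large $d$, because $r\log_2 6-\tfrac12\log_2 r$ grows faster than $d$ would if $r>d$. Therefore $\tfrac12\log_2 r\le\tfrac12\log_2 d$, and the $\log$ terms combine favourably:
\[
r\log_2 6\le d+\tfrac12\log_2 r-\tfrac12\log_2 d+O(1)\le d+O(1).
\]
Hence $r\le d/\log_2 6+C'$.

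Finally, substitute back: $\dim(R_m\times R_m)\le d+2r\le(1+2/\log_2 6)d+2C'$. Since $1+2/\log_2 6<2$ and $d\to\infty$, this bound is below $2d-2$ for large $m$, so $\KT(d,d)$ fails. Small values of $m$ can be absorbed into $C$, or excluded by the phrase ``sufficiently large''.
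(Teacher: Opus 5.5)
Your proposal is correct and follows essentially the paper's route: the $(r,3r)$-covering from Proposition~\ref{prop:triple-covering}, the bound~\eqref{eq:incidence-dimension-bound}, and Proposition~\ref{prop:cover-bound} once $r\le d$ is secured. The only difference is in how $r$ is chosen. You take $r$ minimal with $\ell_r\ge\log_2 m$ and bound it above via $\ell_{r-1}<\log_2 m$. The paper instead sets $r=\lceil d/\log_2 6\rceil+C_0$ explicitly and checks $\ell_r\ge\log_2 m$ directly. In both versions, $r\le d$ is what absorbs the $\sqrt r$ versus $\sqrt d$ factors.
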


\begin{proof}
Let $d=\dim R_m$.
By the observation following Lemma~\ref{lem:dimension-Rm}, we have $d\to\infty$ as $m\to\infty$.
By Proposition~\ref{prop:triple-covering}, there is a constant $a>0$ such that
\[
\ell_r\ge a\frac{6^r}{\sqrt{r}}
\]
for every $r\ge2$.
By~\eqref{eq:incidence-dimension-bound}, there is a constant $b>0$ such that
\[
\log_2m\le b\frac{2^d}{\sqrt d}
\]
for all sufficiently large $m$.
Choose a positive integer $C_0$ such that $a6^{C_0}\ge b$, and let
\[
r=\left\lceil\frac{d}{\log_2 6}\right\rceil+C_0.
\]
Since $1/\log_2 6<1$ and $d\to\infty$, we have $2\le r\le d$ for all sufficiently large $m$.
By the definition of $r$, we have $6^r\ge6^{C_0}2^d$.
Since $r\le d$, we also have $1/\sqrt{r}\ge1/\sqrt d$.
Therefore,
\[
\ell_r\ge a\frac{6^r}{\sqrt{r}}\ge a6^{C_0}\frac{2^d}{\sqrt d}\ge b\frac{2^d}{\sqrt d}\ge\log_2m.
\]
By Proposition~\ref{prop:triple-covering}, the poset $R_m$ has an $(r,3r)$-covering.
Since $r\le d$, Proposition~\ref{prop:cover-bound} implies that
\[
\dim(R_m\times R_m)\le d+2r\le\left(1+\frac{2}{\log_2 6}\right)d+2C_0+2.
\]
Thus the asserted inequality holds with $C=2C_0+2$.

Since $1+2/\log_2 6<2$ and $d\to\infty$ as $m\to\infty$, we have
\[
\left(1+\frac{2}{\log_2 6}\right)d+C<2d-2
\]
for all sufficiently large $m$.
Thus $\dim(R_m\times R_m)<2d-2$, and hence $\KT(d,d)$ fails.
\end{proof}

\begin{remark}
We explain why Proposition~\ref{prop:triple-covering} uses $s=3r$.
Suppose that $B=(b_{ip})_{s\times\ell}$ and $c:[s]\to[r]$ satisfy the conditions of Lemma~\ref{lem:zero-one-matrix}.
For each $j\in[r]$, let $a_j=|c^{-1}(j)|$, and let $B_j$ be the submatrix of $B$ obtained by retaining the rows indexed by $c^{-1}(j)$.
By the second condition of Lemma~\ref{lem:zero-one-matrix}, every column of $B_j$ contains both $0$ and $1$.
Thus $a_j\ge2$, and $B_j$ has at most $2^{a_j}-2$ distinct columns.

By the first condition of Lemma~\ref{lem:zero-one-matrix}, the columns of $B$ are pairwise distinct.
Since $c^{-1}(1),\ldots,c^{-1}(r)$ partition $[s]$, each column of $B$ is determined by the corresponding columns of $B_1,\ldots,B_r$.
Therefore,
\[
\ell\le\prod_{j=1}^r(2^{a_j}-2).
\]

For every integer $a\ge2$,
\[
2^a-2\le6^{(a-1)/2},
\]
with equality if and only if $a=3$.
Indeed, the cases $a=2,3,4$ are immediate, while for $a\ge5$ the inequality follows from $2^a\le6^{(a-1)/2}$.
Since $\sum_{j=1}^r a_j=s$, it follows that
\[
\log_2\ell
\le\sum_{j=1}^r\log_2(2^{a_j}-2)
\le\frac{\log_2 6}{2}\sum_{j=1}^r(a_j-1)
=\frac{\log_2 6}{2}(s-r),
\]
and thus
\[
s-r\ge\frac{2}{\log_2 6}\log_2\ell.
\]

In the construction of Proposition~\ref{prop:triple-covering}, $s=3r$ and $\ell=\ell_r$.
Since $\ell_r=\Theta(6^r/\sqrt r)$, we have
\[
\log_2\ell_r=r\log_2 6-\frac12\log_2 r+O(1)
\qquad\text{as }r\to\infty.
\]
Therefore,
\[
\lim_{r\to\infty}\frac{2r}{\log_2\ell_r}=\frac{2}{\log_2 6}.
\]
Thus three rows in each color class attain the optimal asymptotic ratio of $s-r$ to $\log_2\ell$ among matrices and colorings satisfying the conditions of Lemma~\ref{lem:zero-one-matrix}.
\end{remark}

\section{Three-color coverings of incidence posets}\label{sec:low}

For every integer $d\ge8$, let $Q_d=R_{\lambda(d-1)+1}$.
In this section, we prove that $\dim Q_d=d$ and that $Q_d$ has the $(3,d)$-covering property.

We first establish $\dim Q_d\ge d$.
The function $\lambda$ is nondecreasing.
Indeed, let $q\ge1$, and let $\mathcal F$ be a maximal intersecting family on $[q]$.
Define
\[
\widehat{\mathcal F}
=
\{A\subseteq[q+1]:A\cap[q]\in\mathcal F\}.
\]
If $A,C\in\widehat{\mathcal F}$, then $A\cap[q],C\cap[q]\in\mathcal F$, so $A\cap C\ne\varnothing$.
Hence $\widehat{\mathcal F}$ is intersecting.
If $A\notin\widehat{\mathcal F}$, then $A\cap[q]\notin\mathcal F$.
By the maximality of $\mathcal F$, some $B\in\mathcal F$ is disjoint from $A\cap[q]$.
Since $B\subseteq[q]$, we have $B\in\widehat{\mathcal F}$ and $A\cap B=\varnothing$.
Thus $\widehat{\mathcal F}$ is maximal intersecting.
Moreover,
\[
\mathcal F=\widehat{\mathcal F}\cap2^{[q]},
\]
so $\mathcal F\mapsto\widehat{\mathcal F}$ is injective.
Therefore, $\lambda(q)\le\lambda(q+1)$ for every integer $q\ge1$.
For every positive integer $q<d$, we now have
\[
\lambda(q)\le\lambda(d-1)<\lambda(d-1)+1.
\]
By Lemma~\ref{lem:dimension-Rm}, we have
\[
\dim Q_d
=
\dim R_{\lambda(d-1)+1}
=
\min\{q:\lambda(q)\ge\lambda(d-1)+1\}
\ge d.
\]

If $Q_d$ has a $(3,d)$-covering, then the $d$ linear extensions in the covering form a realizer of $Q_d$.
Hence $\dim Q_d\le d$, and thus $\dim Q_d=d$.
It follows from Proposition~\ref{prop:cover-bound} that for every poset $P$ with $\dim P\ge3$,
\[
\dim(P\times Q_d)
\le \dim P+d-3
<\dim P+\dim Q_d-2.
\]
In particular, $\KT(n,d)$ fails for every integer $n\ge3$.
It remains to construct a $(3,d)$-covering of $Q_d$.

\subsection{The case \texorpdfstring{$d\ge10$}{d >= 10}}\label{subsec:all-d}

Let $d\ge10$, let $t=\lfloor d/2\rfloor$, and define $a_j=\lfloor(d+j-1)/3\rfloor$ for $j\in[3]$.
Since $a_1+a_2+a_3=d$, choose a partition $I_1,I_2,I_3$ of $[d]$ such that $|I_j|=a_j$ for every $j\in[3]$ and $1\in I_1$.
Define
\[
\mathcal A_d=\left\{A\in\binom{[d]}{t}:1\in A,\quad 0<|A\cap I_j|<a_j\text{ for every }j\in[3]\right\}.
\]

\begin{lemma}\label{lem:all-d-count}
For every $d\ge10$, we have $|\mathcal A_d|>\log_2 \alpha(d-2)$, and hence $2^{|\mathcal A_d|}>\lambda(d-1)$.
\end{lemma}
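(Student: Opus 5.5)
The second assertion follows from the first by results already in the paper. By \cite[Lemma~2.2]{BMMV} applied with $n=d-1\ge2$, we have $\lambda(d-1)<\alpha(d-2)$. Hence $|\mathcal A_d|>\log_2\alpha(d-2)$ gives
\[
2^{|\mathcal A_d|}>\alpha(d-2)>\lambda(d-1).
\]
So the work is to bound $|\mathcal A_d|$ from below and $\alpha(d-2)$ from above, and to compare the two. For the upper bound I will use Hansel's inequality~\eqref{eq:hansel}:
\[
\log_2\alpha(d-2)\le(\log_2 3)\binom{d-2}{\lfloor(d-2)/2\rfloor}.
\]
It therefore suffices to prove
\[
|\mathcal A_d|>(\log_2 3)\binom{d-2}{\lfloor d/2\rfloor-1}.
\]
Here I used $\lfloor(d-2)/2\rfloor=t-1$.

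For the lower bound I will count $\mathcal A_d$ by inclusion--exclusion. Put $B=A\setminus\{1\}$, which is a $(t-1)$-subset of $[d]\setminus\{1\}$, a set of size $d-1$. Write $I_1'=I_1\setminus\{1\}$, so $|I_1'|=a_1-1$. The constraint $0<|A\cap I_1|$ holds automatically because $1\in A$. The remaining constraints forbid three events:
\begin{itemize}
\item $B\supseteq I_1'$;
\item $B\cap I_j=\varnothing$ for some $j\in\{2,3\}$;
\item $B\supseteq I_j$ for some $j\in\{2,3\}$.
\end{itemize}
Subtracting the sizes of these bad events gives the lower bound
\[
|\mathcal A_d|\ge\binom{d-1}{t-1}-\binom{d-a_1}{t-a_1}-\sum_{j=2}^3\left[\binom{d-1-a_j}{t-1}+\binom{d-1-a_j}{t-1-a_j}\right].
\]
Each subtracted term is at most roughly a $2^{-a_j}\approx2^{-d/3}$ fraction of the main term.

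Next I compare the main term with the target using
\[
\binom{d-1}{t-1}\Big/\binom{d-2}{t-1}=\frac{d-1}{d-t}.
\]
This ratio equals $2$ for odd $d$ and $2(d-1)/d$ for even $d$. In both cases it is at least $1.8$ when $d\ge10$, which is comfortably above $\log_2 3\approx1.585$. So the required inequality reduces to showing that the error terms total less than about $(1-1.585/1.8)\approx 12\%$ of $\binom{d-1}{t-1}$.

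The main obstacle is making this error estimate rigorous already at $d=10$, where $a_j\in\{3,4\}$ and the estimates $2^{-a_j}$ are not small. I would handle it in two parts:
\begin{itemize}
\item For small $d$ (say $10\le d\le 15$) I would compute $|\mathcal A_d|$ directly from the exact inclusion--exclusion, adding back the pairwise intersections if needed, and compare with $(\log_2 3)\binom{d-2}{t-1}$.
\item For larger $d$ I would bound each ratio such as $\binom{d-1-a}{t-1}/\binom{d-1}{t-1}=\prod_{i=0}^{a-1}\frac{d-t-i}{d-1-i}\le (1/2+\text{small})^{a}$, and use $a_j\ge\lfloor d/3\rfloor\ge5$ to bound the total error by, say, $6\cdot(0.6)^{5}<0.5$ times the main term. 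I would refine the constants until this falls below the needed $12\%$, or else raise the threshold for the explicit check.
\end{itemize}

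If the bound at $d=10$ turns out tight, I would use the exact count with the specific partition sizes $(a_1,a_2,a_3)=(3,3,4)$ and $t=5$, where everything is a small explicit binomial computation.
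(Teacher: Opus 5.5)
\textbf{The gap is in the range $10\le d\le 13$.} There, the inequality you reduce to, $|\mathcal A_d|>(\log_2 3)\binom{d-2}{t-1}$, is false. So no sharpening of the inclusion--exclusion estimate and no explicit computation can close it. The count itself is exact, and it gives $|\mathcal A_d|=90,164,396,690$ for $d=10,11,12,13$. For example, at $d=10$ with $(a_1,a_2,a_3)=(3,3,4)$ and $t=5$, the intersection patterns $(1,1,3),(1,2,2),(2,1,2),(2,2,1)$ contribute $12+18+36+24=90$. By contrast, $(\log_2 3)\binom{d-2}{t-1}\approx 110.9,\ 199.7,\ 399.4,\ 732.3$, so Hansel's bound is too weak in this range.

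The missing ingredient is a sharper upper bound on $\alpha(d-2)$ for small $d$. The paper uses Wiedemann's computation of the eighth Dedekind number, $\alpha(8)<2^{76}$, together with $\alpha(q+1)\le\alpha(q)^2$. The latter holds because an upset of $2^{[q+1]}$ is determined by its two sections on the last coordinate. Together these give $\log_2\alpha(d-2)<76\cdot 2^{d-10}$, i.e.\ $76,152,304,608$, which lie below $90,164,396,690$. Hansel's bound only becomes usable from $d=14$ on. For $14\le d\le 20$ the exact values satisfy $5|\mathcal A_d|>8\beta_d$, where $\beta_d=\binom{d-2}{\lfloor (d-2)/2\rfloor}$.

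\textbf{A second error affects where your asymptotic argument can start.} The ratio $(d-1)/(d-t)$ equals $2(d-1)/d$ for even $d$, but for odd $d$ it is $2(d-1)/(d+1)$, not $2$. It equals $5/3$ at $d=11$ and $12/7$ at $d=13$, and for odd $d$ it reaches $1.8$ only at $d=19$. So the margin over $\log_2 3$ available to absorb the error terms is thinner than you assume.

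Once this is corrected, your large-$d$ estimate is essentially the paper's union bound. For $d\ge 21$ one has $a_j\ge 7$, so the bad events have total relative size at most
\[
2^{1-a_1}+3\cdot 2^{-a_2}+3\cdot 2^{-a_3}\le\frac1{16}.
\]
Also $\binom{d-1}{t-1}\ge\frac{20}{11}\beta_d$, which yields $|\mathcal A_d|\ge\frac{75}{44}\beta_d>(\log_2 3)\beta_d$. The range $14\le d\le 20$ is then checked numerically. Your derivation of the second assertion from the first via $\lambda(d-1)<\alpha(d-2)$ is fine.
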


\begin{proof}
Writing $[x^t]p(x)$ for the coefficient of $x^t$ in $p(x)$, we have
\[
|\mathcal A_d|=[x^t]\bigl(x(1+x)^{a_1-1}-x^{a_1}\bigr)
\prod_{j=2}^3\bigl((1+x)^{a_j}-1-x^{a_j}\bigr).
\]
Let $\beta_d=\binom{d-2}{\lfloor(d-2)/2\rfloor}$.
Table~\ref{tab:high-counts} lists the values of $|\mathcal A_d|$ and $\beta_d$ for $10\le d\le20$.

\begin{table}[htbp]
\centering
\caption{Values of $|\mathcal A_d|$ and $\beta_d$ for $10\le d\le20$.}
\label{tab:high-counts}
\begin{tabular}{r c r r}
\toprule
$d$ & $(a_1,a_2,a_3)$ & $|\mathcal A_d|$ & $\beta_d$\\
\midrule
10 & $(3,3,4)$ & 90 & 70\\
11 & $(3,4,4)$ & 164 & 126\\
12 & $(4,4,4)$ & 396 & 252\\
13 & $(4,4,5)$ & 690 & 462\\
14 & $(4,5,5)$ & 1550 & 924\\
15 & $(5,5,5)$ & 2800 & 1716\\
16 & $(5,5,6)$ & 6100 & 3432\\
17 & $(5,6,6)$ & 11008 & 6435\\
18 & $(6,6,6)$ & 23710 & 12870\\
19 & $(6,6,7)$ & 42805 & 24310\\
20 & $(6,7,7)$ & 90895 & 48620\\
\bottomrule
\end{tabular}
\end{table}

An upset of $2^{[q+1]}$ is determined by its two sections on the last coordinate, each of which is an upset of $2^{[q]}$.
Thus $\alpha(q+1)\le \alpha(q)^2$.
Since $\alpha(8)<2^{76}$ by~\cite{Wiedemann}, we obtain $\log_2 \alpha(d-2)<76\cdot2^{d-10}$ for $10\le d\le13$.
The four bounds $76,152,304,608$ are smaller than the corresponding values of $|\mathcal A_d|$ in Table~\ref{tab:high-counts}.
For $14\le d\le20$, Table~\ref{tab:high-counts} shows that $5|\mathcal A_d|>8\beta_d$.
Since $\log_2 3<8/5$, Hansel's bound~\eqref{eq:hansel} yields
\[
|\mathcal A_d|>\frac85\beta_d>(\log_2 3)\beta_d\ge\log_2 \alpha(d-2).
\]

Now let $d\ge21$, and choose uniformly at random a $t$-element subset $X$ of $[d]$ containing $1$.
Let $M_d=\binom{d-1}{t-1}$.
Since $t\le d/2$, the successive conditional probabilities of including prescribed elements other than $1$ are at most $1/2$.
Hence $\Pr(I_1\subseteq X)\le2^{1-a_1}$ and $\Pr(I_j\subseteq X)\le2^{-a_j}$ for $j\in\{2,3\}$.
For $j\in\{2,3\}$, we also have $\Pr(I_j\cap X=\varnothing)\le2^{1-a_j}$.
To see this, write $g=a_j$.
If $d=2t$, then $[d]\setminus X$ is a uniformly chosen $t$-subset of $[d]\setminus\{1\}$, and all factors after the first in the probability of containing $I_j$ are at most $1/2$.
If $d=2t+1$, then $[d]\setminus X$ is a uniformly chosen $(t+1)$-subset of a $2t$-element set.
The first two factors have product $(t+1)/(2(2t-1))\le1/2$, and every subsequent factor is at most $1/2$.
In both cases, the probability is at most $2^{1-g}$.
Since $a_1,a_2,a_3\ge7$, the union bound yields
\[
1-\frac{|\mathcal A_d|}{M_d}
\le2^{1-a_1}+3\cdot2^{-a_2}+3\cdot2^{-a_3}
\le\frac1{16}.
\]
Moreover,
\[
\frac{M_d}{\beta_d}=
\begin{cases}
2(d-1)/d,&d\text{ even},\\
2(d-1)/(d+1),&d\text{ odd},
\end{cases}
\qquad\text{so}\qquad
\frac{M_d}{\beta_d}\ge\frac{20}{11}.
\]
Therefore
\[
|\mathcal A_d|\ge\frac{15}{16}M_d\ge\frac{75}{44}\beta_d
>\frac85\beta_d>(\log_2 3)\beta_d\ge\log_2 \alpha(d-2).
\]
Finally, $\lambda(d-1)<\alpha(d-2)<2^{|\mathcal A_d|}$ by~\cite[Lemma~2.2]{BMMV}.
\end{proof}

\begin{proposition}\label{prop:all-d}
For every $d\ge10$, the poset $Q_d$ has the $(3,d)$-covering property.
\end{proposition}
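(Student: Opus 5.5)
We will apply Lemma~\ref{lem:zero-one-matrix} with $r=3$, $s=d$ and $\ell=|\mathcal A_d|$. Enumerate $\mathcal A_d=\{A_1,\ldots,A_\ell\}$. Define the $d\times\ell$ matrix $B=(b_{ip})$ by $b_{ip}=1$ if and only if $i\in A_p$, and define the row coloring $c:[d]\to[3]$ by $c(i)=j$ for $i\in I_j$. By Lemma~\ref{lem:all-d-count} we have $2^{\ell}>\lambda(d-1)$, so $2^\ell\ge\lambda(d-1)+1$. Also $\lambda(d-1)+1\ge3$ (for instance, $\lambda(d-1)\ge d-1$ because the stars are maximal intersecting families). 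Hence $Q_d=R_{\lambda(d-1)+1}$ is covered by the conclusion of Lemma~\ref{lem:zero-one-matrix}, once its two conditions are checked.

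Condition (ii) is immediate from the definition of $\mathcal A_d$. Since $c^{-1}(j)=I_j$ and $0<|A_p\cap I_j|<a_j=|I_j|$, the set $I_j$ meets both $A_p$ and its complement. So column $p$ restricted to the rows of color $j$ contains both a $1$ and a $0$.

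Condition (i) follows the argument in Proposition~\ref{prop:triple-covering}. Fix distinct $p,q$.
\begin{itemize}
\item Row $1$ gives $(1,1)$, because $1\in A_p\cap A_q$.
\item Since $|A_p|=|A_q|=t$ and $A_p\ne A_q$, we can pick $u\in A_p\setminus A_q$ and $v\in A_q\setminus A_p$. These rows give $(1,0)$ and $(0,1)$.
\item For $(0,0)$ we need a row outside $A_p\cup A_q$. Here $|A_p\cup A_q|\le 2t-1\le d-1$ because the two sets share the element $1$ and $2t\le d$. So some $w\in[d]\setminus(A_p\cup A_q)$ exists.
\end{itemize}
Hence $\{(b_{ip},b_{iq}):i\in[d]\}=\{0,1\}^2$.

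There is no real obstacle: the counting was already done in Lemma~\ref{lem:all-d-count}. The one point needing care is the $(0,0)$ pair, which relies on $t=\lfloor d/2\rfloor$ together with the common element $1$.
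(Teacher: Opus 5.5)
Your proposal is correct and follows essentially the same route as the paper: the same incidence matrix on $[d]\times\mathcal A_d$, the same row coloring by $I_1,I_2,I_3$, the same verification of the two conditions of Lemma~\ref{lem:zero-one-matrix} (the common element $1$ together with $|A\cup A'|\le 2t-1<d$ supplies the $(1,1)$ and $(0,0)$ pairs), and the same appeal to Lemma~\ref{lem:all-d-count} for $\lambda(d-1)+1\le 2^{|\mathcal A_d|}$. Your extra check that $\lambda(d-1)+1\ge 3$ via stars is a harmless detail the paper leaves implicit.
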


\begin{proof}
Let $B=(b_{iA})$ have rows indexed by $[d]$ and columns indexed by $\mathcal A_d$, with $b_{iA}=1$ if and only if $i\in A$.
Define $c:[d]\to[3]$ by $c(i)=j$ for $i\in I_j$.
Every column of $B$ is nonconstant on each $I_j$.
For distinct $A,A'\in\mathcal A_d$, the sets $A\cap A'$, $A\setminus A'$, and $A'\setminus A$ are nonempty, since $1\in A\cap A'$ and $|A|=|A'|$.
Also, $|A\cup A'|\le2t-1<d$.
Thus the two columns contain all four pairs in $\{0,1\}^2$.
By Lemma~\ref{lem:all-d-count}, we have $\lambda(d-1)+1\le2^{|\mathcal A_d|}$.
Lemma~\ref{lem:zero-one-matrix} therefore shows that $Q_d$ has a $(3,d)$-covering.
\end{proof}

The same definition at $d=9$ and $d=8$ yields $|\mathcal A_9|=36$ and $|\mathcal A_8|=18$, respectively.
However, $2^{36}<\lambda(8)+1$ and $2^{18}<\lambda(7)+1$ by~\cite[Proposition~1.2]{BMMV}.
Thus these column families are too small for Lemma~\ref{lem:zero-one-matrix}.
For $d\in\{8,9\}$, we instead apply Lemma~\ref{lem:vertex-colors} directly.
For linear orders $(L_i)_{i\in I}$ on a set $W$ and distinct $x,y\in W$, define
\[
D(x,y)=\{i\in I:y<_{L_i}x\}.
\]

\subsection{The case \texorpdfstring{$d=9$}{d = 9}}\label{subsec:nine}

\begin{proposition}\label{prop:nine}
The poset $Q_9=R_{229\,809\,982\,113}$ has the $(3,9)$-covering property.
\end{proposition}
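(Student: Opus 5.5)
The plan is to verify the hypotheses of Lemma~\ref{lem:vertex-colors} with $m=\lambda(8)+1=229\,809\,982\,113$, $r=3$ and $s=9$, by a generalized Spencer-type lexicographic construction. Lemma~\ref{lem:zero-one-matrix} cannot do this directly: with nine rows split into three colour classes it gives at most $2^{|\mathcal A_9|}=2^{36}$ vertices, but we need about $2^{37.75}$. So I would replace the binary coordinates by coordinates over small alphabets. Each alphabet $W_p$ carries its own nine linear orders $L^p_1,\ldots,L^p_9$, and we fix a colouring $c:[9]\to[3]$ with three rows in each colour, as in Proposition~\ref{prop:triple-covering}.

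\emph{Vertices and orders.} Each vertex is a tuple $(x_1,\ldots,x_\ell)\in W_1\times\cdots\times W_\ell$. We define $L_i$ lexicographically: compare two tuples at the first coordinate $p$ where they differ, using $L^p_i$ there. For this coordinate system, write $D_p(a,b)=\{i: b<_{L^p_i}a\}$. Repeating the proof of Lemma~\ref{lem:zero-one-matrix}, the hypotheses of Lemma~\ref{lem:vertex-colors} reduce to three local conditions:
\begin{enumerate}
\item for every $p$ and distinct $a,b\in W_p$, the set $D_p(a,b)$ meets every colour class $c^{-1}(j)$;
\item for every $p$ and distinct $a,b,b'\in W_p$, the set $D_p(a,b)\cap D_p(a,b')$ is nonempty;
\item for every $p\ne q$, every pair $a\ne b$ in $W_p$ and every pair $a'\ne b'$ in $W_q$, the set $D_p(a,b)\cap D_q(a',b')$ is nonempty.
\end{enumerate}
Condition (i) gives colour condition~(ii) of Lemma~\ref{lem:vertex-colors}. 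Conditions (ii) and (iii) give the triple condition. When the first differences of $x$ from $y$ and from $z$ occur at the same coordinate, we use (i) if $y_p=z_p$ and (ii) otherwise; when they occur at different coordinates, we use (iii). In the binary case each $D_p(a,b)$ is a set $A$ or its complement, which recovers the matrix conditions.

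\emph{The gadgets.} I would choose each coordinate either binary, given by a set $A\subseteq[9]$ as before, or ternary, given by three orders patterns on a $3$-element set. In the ternary case the six sets $D_p(a,b)$ must each meet every colour class and satisfy (ii). The counting target is
\[
2^{\#\text{binary}}\,3^{\#\text{ternary}}\ge229\,809\,982\,113.
\]
Since $\log_2 3\approx1.585$, trading some binary columns of $\mathcal A_9$ for ternary gadgets, or adding ternary gadgets built from sets of sizes $4$ and $5$, should be enough to push the exponent from $36$ past $37.75$. Condition (iii) says that the family of all sets $D_p(a,b)$, over all coordinates, is cross-intersecting between distinct coordinates. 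A natural way to guarantee this is to require all these sets to contain a fixed row and to have size at least $5$ in $[9]$, so that any two of them meet. Alternatively, I would take every $D$-set to contain row~$1$; this forces the complementary sets in a binary gadget to be handled by allowing sizes $4$ and $5$ with a mixed structure.

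\emph{The main obstacle.} The difficult step is the finite optimization: find a compatible family of gadgets, binary and ternary, satisfying (i)--(iii) whose product of alphabet sizes exceeds $\lambda(8)$. I would first attempt this with a computer search, for example a maximum-weight clique problem in which vertices are candidate gadgets, weighted by $\log_2|W_p|$, and edges join pairs of gadgets satisfying (iii). I would then present the resulting explicit family in a table, together with a hand verification of (i)--(iii) organized by the sizes of the sets involved. Finally, Lemma~\ref{lem:vertex-colors} yields the $(3,9)$-covering of $Q_9$.
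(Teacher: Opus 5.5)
Your reduction of Lemma~\ref{lem:vertex-colors} to the local conditions (i)--(iii) for a lexicographic product is correct. The proof stops exactly where the work lies, however. No gadgets are exhibited, and the claim that ternary gadgets push the exponent past $\log_2(\lambda(8)+1)\approx 37.74$ is unsupported. In fact it is false for the framework you describe.

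Since $D_p(b,a)=[9]\setminus D_p(a,b)$, condition (iii) says that the splits $\{D,[9]\setminus D\}$ from distinct coordinates pairwise \emph{cross} (all four Venn regions are nonempty). This also rules out your suggestion that every $D$-set contain a fixed row or have size at least $5$.

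Now average over uniformly random arrangements of $[9]$ on a circle in which the colours repeat $1,2,3,1,2,3,\ldots$. Every admissible split (both sides meeting all colours) has sides of sizes $\{4,5\}$ or $\{3,6\}$, and is an arc-split with probability $1/9$ or $1/3$, respectively. An arc-split is a chord joining two of the $9$ gaps. Chords from different coordinates must cross with four distinct endpoints. Hence in each arrangement at most $4$ coordinates have an arc-split, and their endpoint sets are pairwise disjoint. Give each coordinate weight $4$ if it has an arc-split, plus the number of endpoints of its chords; the total weight is at most $16+9=25$ in every arrangement.

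A binary coordinate has expected weight at least $6/9=2/3$. A ternary coordinate has at least two distinct splits: if all three pairs gave the same split, the rows on one side would induce one order on the three symbols and the other rows its reverse, and the middle symbol would violate (ii). A $\{3,6\}$-split alone gives expected weight $2$. For two distinct $\{4,5\}$-splits, the weight is at least $6X_1+6X_2-5X_1X_2$, where $X_k$ indicates that the $k$th split is an arc. Both are arcs with probability at most $1/18$, so the expected weight is at least $12/9-5/18=19/18$. Therefore $\frac23\beta+\frac{19}{18}\tau\le 25$ for $\beta$ binary and $\tau$ ternary coordinates, whence $\beta+\tau\log_2 3\le\frac{450}{19}\log_2 3\approx 37.54<37.74$. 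So no choice of binary and ternary gadgets, with three rows per colour, yields $\lambda(8)+1$ vertices, and the proposed search must fail.

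The missing idea is to abandon the product structure, so that the intersection requirement need only hold among the $D$-sets seen from a single vertex. The paper keeps binary coordinates, indexed by the $72$ four-sets in $\mathcal A\cup\mathcal B$. These are the four-sets that contain $1$, respectively meet $I_1$ in $\{2\}$ or $\{3\}$, and meet every colour class in one or two rows. In $L_i$ the order at coordinate $C$ is $0<1$ exactly when $i\in C$. The vertices, however, are only the membership vectors of certain intersecting families: all $\mathcal H\subseteq\mathcal A$, and all $\{B\}\cup\mathcal H$ with $B\in\mathcal B$ and every member of $\mathcal H$ meeting $B$. Seen from a vertex $\mathcal H$, every $D$-set is either a member of $\mathcal H$ or the $5$-set complementary to a non-member. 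Any two such sets meet, because $\mathcal H$ is intersecting, two $5$-subsets of $[9]$ meet, and a member and a non-member are distinct $4$-sets. Coordinates $A\in\mathcal A$ and $B\in\mathcal B$ with $A\cap B=\varnothing$ give non-crossing splits, which any product forbids. Admitting them is what yields $|\mathcal V|=2^{36}+36\cdot 2^{33}\approx 2^{38.46}>\lambda(8)+1$.
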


\begin{proof}
Let $U=[9]$, and partition $U$ into $I_1=\{1,2,3\}$, $I_2=\{4,5,6\}$, and $I_3=\{7,8,9\}$.
Define
\[
\begin{aligned}
\mathcal A&=\left\{A\in\binom{U}{4}:1\in A,\quad 1\le|A\cap I_j|\le2\text{ for }j\in[3]\right\},\\
\mathcal B&=\left\{B\in\binom{U}{4}:B\cap I_1\in\{\{2\},\{3\}\},\quad 1\le|B\cap I_j|\le2\text{ for }j\in\{2,3\}\right\}.
\end{aligned}
\]
Counting according to the intersection sizes, we obtain $|\mathcal A|=2\cdot3\cdot3+3\cdot3+3\cdot3=36$ and $|\mathcal B|=2\cdot2\cdot3\cdot3=36$.
Every $B\in\mathcal B$ is disjoint from exactly three members of $\mathcal A$.
Indeed, if its intersection sizes with $(I_1,I_2,I_3)$ are $(1,2,1)$, these members are obtained from $U\setminus B$ by deleting either the point in $(I_1\setminus B)\setminus\{1\}$ or one of the two points in $I_3\setminus B$.
The case $(1,1,2)$ is symmetric.

Let
\[
\mathcal V=2^{\mathcal A}\cup
\bigcup_{B\in\mathcal B}
\left\{\{B\}\cup\mathcal H:\mathcal H\subseteq\mathcal A,\quad A\cap B\ne\varnothing\text{ for every }A\in\mathcal H\right\}.
\]
Every member of $\mathcal V$ is an intersecting family of $4$-subsets of $U$.
The families in this union are pairwise disjoint, so
\[
|\mathcal V|=2^{36}+36\cdot2^{33}=377\,957\,122\,048.
\]
Fix an ordering of $\mathcal A\cup\mathcal B$ and represent each $\mathcal H\in\mathcal V$ by its membership vector.
For each $i\in U$, let $L_i$ be the lexicographic order on $\mathcal V$ in which the order at coordinate $C$ is $0<1$ if $i\in C$ and $1<0$ otherwise.
If $C$ is the first coordinate on which $\mathcal H$ and $\mathcal G$ differ, then
\[
D(\mathcal H,\mathcal G)=
\begin{cases}
C,&C\in\mathcal H,\\
U\setminus C,&C\notin\mathcal H.
\end{cases}
\]
Each of these sets meets every $I_j$.
For fixed $\mathcal H$, any two sets of the first form intersect because they belong to $\mathcal H$.
Any two sets of the second form intersect because they have size five.
For a mixed pair $C,U\setminus E$, we have $C\in\mathcal H$ and $E\notin\mathcal H$, so $C\ne E$.
Since $|C|=|E|=4$, we have $C\setminus E\ne\varnothing$.
Thus $D(\mathcal H,\mathcal G)\cap D(\mathcal H,\mathcal F)\ne\varnothing$ for all distinct $\mathcal H,\mathcal G,\mathcal F\in\mathcal V$.

Define $c_i(\mathcal H)=j$ whenever $i\in I_j$.
The orders and colorings satisfy both conditions of Lemma~\ref{lem:vertex-colors}.
By~\cite[Proposition~1.2]{BMMV},
\[
\lambda(8)+1=229\,809\,982\,113<|\mathcal V|.
\]
Choose a subfamily $\mathcal V'\subseteq\mathcal V$ of size $\lambda(8)+1$ and restrict the orders and colorings to $\mathcal V'$.
Lemma~\ref{lem:vertex-colors} shows that $Q_9$ has a $(3,9)$-covering.
\end{proof}

\subsection{The case \texorpdfstring{$d=8$}{d = 8}}\label{subsec:eight}

Let $U=\{a,b\}\sqcup J\sqcup K$, where $|J|=|K|=3$, and let $X=J\cup K$.
Let
\[
T=\{S\subseteq X:0<|S\cap J|<3,\quad 0<|S\cap K|<3\},
\]
ordered by inclusion.

\begin{lemma}\label{lem:eight-local}
There is a family $\mathcal V$ of $714\,756$ upsets of $T$ and linear orders $L_i$, $i\in U$, on $\mathcal V$ such that $D(H,H')\cap D(H,H'')\ne\varnothing$ for all distinct $H,H',H''\in\mathcal V$.
Moreover, $D(H,H')$ meets each of $\{a,b\}$, $J$, and $K$ for all distinct $H,H'\in\mathcal V$.
\end{lemma}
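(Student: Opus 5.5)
The plan is to copy the lexicographic construction from the proof of Proposition~\ref{prop:nine}, with the $36$ sets of $T$ as coordinates and with two distinguished points $a,b$ that make the required intersections automatic. Note that $|T|=(2^3-2)^2=36$.

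\emph{Choice of $\mathcal V$ and the orders.} Fix an arbitrary ordering of $T$. Identify each upset $H$ of $T$ with its membership vector in $\{0,1\}^{T}$. For each coordinate $S\in T$ set
\[
\Phi(S)=S\cup\{a\},\qquad \Psi(S)=(X\setminus S)\cup\{b\}.
\]
For $i\in U$, let $L_i$ be the lexicographic order on upsets of $T$ in which, at coordinate $S$, we put $0<1$ if $i\in\Phi(S)$ and $1<0$ otherwise. (Equivalently, $1<0$ if $i\in\Psi(S)$, since $\Phi(S)$ and $\Psi(S)$ partition $U$.) Each $L_i$ is a lexicographic order on distinct vectors, so it is a linear order. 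As in the case $d=9$, if $C$ is the first coordinate where $H$ and $H'$ differ, then
\[
D(H,H')=\Phi(C)\text{ if }C\in H,\qquad D(H,H')=\Psi(C)\text{ if }C\notin H.
\]
Since $C\in T$, both $C$ and $X\setminus C$ meet $J$ and $K$. Hence every such set meets $J$, $K$, and $\{a,b\}$, which is the \emph{Moreover} part of the lemma.

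\emph{Pairwise intersection.} Fix $H$ and two other upsets $H',H''$, with first differing coordinates $C$ and $E$. If $C,E\in H$, the sets $\Phi(C)$ and $\Phi(E)$ share $a$. If $C,E\notin H$, the sets $\Psi(C)$ and $\Psi(E)$ share $b$. In the mixed case $C\in H$ and $E\notin H$, we have $C\not\subseteq E$, because $H$ is an upset and $C\subseteq E$ would force $E\in H$. So some point of $C$ lies in $X\setminus E$, giving $\Phi(C)\cap\Psi(E)\ne\varnothing$. This is exactly where the upset hypothesis is used; it plays the role that intersecting families played in Proposition~\ref{prop:nine}. Thus every family of upsets of $T$ satisfies the triple condition, and it only remains to have enough upsets.

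\emph{Counting, the main obstacle.} It remains to show that $T$ has at least $714\,756$ upsets; I expect this to be exactly the number of upsets of $T$, in which case we take $\mathcal V$ to be all of them, and otherwise a subfamily of that size. Here $T\cong P_0\times P_0$, where $P_0$ is the $6$-element poset of proper nonempty subsets of a $3$-set, which is a $6$-crown. I would count upsets of the product by a transfer-matrix argument. An upset of $P_0\times P_0$ is a chain, under reverse inclusion along the order of the first factor, of upsets of $P_0$ indexed by the elements of the first factor. Concretely, it is an order-reversing map from $P_0$ to the lattice $\mathcal U(P_0)$ of upsets of $P_0$, and $\mathcal U(P_0)$ has $18$ elements. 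The count is therefore the number of order-preserving maps from the crown $P_0$ into an $18$-element lattice. This can be computed by summing over the images of the three minimal elements and then counting the choices for the three maximal elements, each of which must lie above its two neighbours. The arithmetic is routine but must be done carefully, by hand using the $S_3\times S_3$ symmetry or by a short computer check. This count is the only step that is not immediate.
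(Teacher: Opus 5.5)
Your construction of the orders and your proof of both intersection properties coincide with the paper's. Your $\Phi(S)$ and $\Psi(S)$ are its $C(S)$ and $U\setminus C(S)$, and the upset hypothesis is used in the mixed case exactly as there. That part is correct and, as you note, works for every family of upsets of $T$. The gap is the counting step. It is the only quantitative content of the lemma, and you leave it undone.

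Your expectation that $714\,756$ is the total number of upsets of $T$ is also wrong. The paper's family is a specific subfamily $\mathcal V=\mathcal U_+\cup\mathcal U_-$. Here $\mathcal U_+$ consists of the upsets containing all of $T_4$, and $\mathcal U_-$ of those disjoint from $T_2$. The upset generated by a single member $\{j,k\}$ of $T_2$ contains only four of the nine members of $T_4$, so it lies in neither. Hence $T$ has strictly more than $714\,756$ upsets.

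The paper restricts to $\mathcal U_+\cup\mathcal U_-$ because this family is easy to count. Identify $T_2$ and $T_3$ with the vertices and edges of the graph $G$ on $J\times K$ joining pairs that agree in one coordinate. Then $|\mathcal U_+|=\sum_{W\subseteq V(G)}2^{e(G[W])}=488\,450$. A bijection gives $|\mathcal U_-|=|\mathcal U_+|$, and $|\mathcal U_+\cap\mathcal U_-|=2^{18}$, so $|\mathcal V|=2\cdot488\,450-2^{18}$.

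Your route through order-preserving maps from the crown into the $18$-element lattice $\mathcal U(P_0)$ is also sound. Since any subfamily of the right size will do, a larger total is harmless, but the count still has to be carried out. With $u(A)$ the number of upsets of $P_0$ containing $A$, the count is
\[
\sum_{A_1,A_2,A_3\in\mathcal U(P_0)}u(A_1\cup A_2)\,u(A_1\cup A_3)\,u(A_2\cup A_3).
\]
Organize it by how many of the $A_i$ contain a minimal element of $P_0$. The terms in which none does are exactly the upsets disjoint from $T_2$, and they contribute $488\,450$. This alone is not enough. My evaluation of the remaining terms (exactly one, two, or three such $A_i$) gives $228\,573$, $58\,092$, and $8\,443$. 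The total is therefore $783\,558\ge714\,756$. Either this evaluation or the paper's restriction to $\mathcal U_+\cup\mathcal U_-$ has to be written out to complete the proof.
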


\begin{proof}
Fix an ordering of $T$ and represent each upset $H$ by its membership vector.
For $S\in T$, define $C(S)=\{a\}\cup S$.
For $i\in U$, define $L_i$ lexicographically, using $0<1$ at coordinate $S$ if $i\in C(S)$, and $1<0$ otherwise.
If $S$ is the first coordinate on which $H$ and $H'$ differ, then
\[
D(H,H')=
\begin{cases}
C(S),&S\in H,\\
U\setminus C(S),&S\notin H.
\end{cases}
\]
Both sets meet $\{a,b\}$, $J$, and $K$.
For fixed $H$, two sets of the first form both contain $a$, and two of the second form both contain $b$.
A mixed pair $C(S),U\setminus C(E)$ can be disjoint only if $S\subseteq E$, which is impossible when $S\in H$ and $E\notin H$ because $H$ is an upset.
Thus the required intersection properties hold for every family of upsets of $T$.

For $q\in\{2,3,4\}$, let $T_q=\{S\in T:|S|=q\}$.
Then $|T_2|=9$, $|T_3|=18$, and $|T_4|=9$.
Let $\mathcal U_+$ be the family of upsets containing $T_4$, and let $\mathcal U_-$ be the family of upsets disjoint from $T_2$.
Let $G$ be the graph on $J\times K$ in which two vertices are adjacent when they agree in one coordinate.
For $W\subseteq V(G)$, let $e(G[W])$ be the number of edges with both endpoints in $W$.
Identify $T_2$ with $V(G)$ and $T_3$ with $E(G)$, so that containment is incidence.
For $H\in\mathcal U_+$, let $W=T_2\setminus H$.
The edges incident with vertices outside $W$ must belong to $H$, while the edges of $G[W]$ can be chosen independently.
Consequently, $|\mathcal U_+|=\sum_{W\subseteq V(G)}2^{e(G[W])}$.
For each $W$, the exponent is the sum of $\binom{|W\cap R|}{2}$ over the three rows and three columns $R$ of $J\times K$.
A direct enumeration of the $2^9$ subsets yields
\[
\begin{aligned}
\sum_{W\subseteq V(G)}z^{e(G[W])}
&=34+54z+81z^2+42z^3+90z^4+36z^5+51z^6\\
&\quad+36z^7+36z^8+6z^9+18z^{10}+18z^{11}+9z^{14}+z^{18}.
\end{aligned}
\]
At $z=2$, this identity yields $|\mathcal U_+|=488\,450$.
The map $H\mapsto\{S\in T:X\setminus S\notin H\}$ is a bijection from $\mathcal U_+$ to $\mathcal U_-$, so $|\mathcal U_-|=488\,450$.
An upset in $\mathcal U_+\cap\mathcal U_-$ is determined by an arbitrary subset of $T_3$.
Thus $\mathcal V=\mathcal U_+\cup\mathcal U_-$ has size $2\cdot488\,450-2^{18}=714\,756$.
\end{proof}

\begin{proposition}\label{prop:eight}
The poset $Q_8=R_{1\,422\,565}$ has the $(3,8)$-covering property.
\end{proposition}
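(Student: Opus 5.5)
The plan is to double the family of Lemma~\ref{lem:eight-local} and then apply Lemma~\ref{lem:vertex-colors} with $r=3$ and $s=8$. Rows are indexed by $U$, and the colour classes are $\{a,b\}$, $J$ and $K$. The lemma supplies $|\mathcal V|=714\,756$ vertices, but we need $\lambda(7)+1=1\,422\,565$. Since $2\cdot714\,756=1\,429\,512\ge1\,422\,565$, one extra binary coordinate should be enough. So I would work on $\mathcal V\times\{0,1\}$, take lexicographic orders $L_i$ that first compare the $\mathcal V$-component by the orders of Lemma~\ref{lem:eight-local}, and then break ties by the bit. Finally I would restrict to a subset of size $\lambda(7)+1$, exactly as in the proof of Proposition~\ref{prop:nine}, and set $c_i\equiv j$ for $i$ in the $j$-th class.

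With this ordering, pairs with different $\mathcal V$-components have $D$-sets exactly as in Lemma~\ref{lem:eight-local}. For these pairs, condition~(ii) of Lemma~\ref{lem:vertex-colors} and every triple avoiding twin pairs are already handled by that lemma. The only new $D$-sets come from twin pairs $(H,0),(H,1)$. I will let the direction of the last coordinate depend on the prefix $H$; this is allowed because lexicographic orders may branch on the prefix. Then there is a set $Y_H\subseteq U$ such that
\[
D\bigl((H,1),(H,0)\bigr)=Y_H,\qquad D\bigl((H,0),(H,1)\bigr)=U\setminus Y_H.
\]
The conditions to verify are these.
\begin{enumerate}
\item Both $Y_H$ and $U\setminus Y_H$ meet each of $\{a,b\}$, $J$ and $K$.
\item Both $Y_H$ and $U\setminus Y_H$ intersect every set $D(H,H')$ with $H'\ne H$ in the family. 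These sets have the form $C(S)$ with $S\in H$, or $U\setminus C(E)$ with $E\notin H$, where $S$ or $E$ is the first coordinate of difference.
\end{enumerate}
Condition (ii) is what triples containing a twin require.

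For the choice of $Y_H$, I would first try $Y_H=\{a\}\cup S_0$ and $U\setminus Y_H=\{b\}\cup(X\setminus S_0)$ with $S_0\in T$. Then $Y_H$ meets every $C(S)$ through $a$, and $U\setminus Y_H$ meets every $U\setminus C(E)$ through $b$. The remaining requirements are the following.
\begin{enumerate}
\item $S_0\not\subseteq E$ for every $E\notin H$ that actually arises as a first-difference coordinate at $H$.
\item $S\not\subseteq S_0$ for every such $S\in H$.
\end{enumerate}
If $S_0$ were itself a first-difference coordinate, these would conflict. So the natural move is to pick $S_0$ on the boundary of $H$ in a suitable sense: a minimal element of $H$ such that no element of $H$ strictly below it, and no non-member above it, is a first-difference coordinate.

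I expect the main obstacle to be that such an $S_0$ need not exist for every $H$. If it fails, I would not double all of $\mathcal V$. Instead I would double only those $H$ admitting a good $Y_H$, and possibly also choose $Y_H$ of the other shapes, such as $\{b\}\cup S_0$. Concretely, I would order $T$ so that the first-difference coordinates seen from a given $H$ are controlled; listing $T_2$ first, then $T_3$, then $T_4$ is a natural choice. I would then count the $H\in\mathcal V$ that admit a valid $Y_H$, using the decomposition $\mathcal U_+\cup\mathcal U_-$ from the proof of Lemma~\ref{lem:eight-local}.

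The margin is only $1\,429\,512-1\,422\,565=6\,947$. So the final step is a careful count, analogous to the generating-function computation in that proof. It must show that at most $6\,947$ upsets have to remain undoubled; equivalently, that at least $707\,809$ of them admit a valid $Y_H$.
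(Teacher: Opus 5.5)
Your reduction of the twin conditions is correct, but the count you postpone is false, and this sinks the approach.

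The colour-class requirement forces $\{Y_H,U\setminus Y_H\}=\{\{a\}\cup S_0,\{b\}\cup(X\setminus S_0)\}$ with $S_0\in T$. Your alternative shape $\{b\}\cup S_0$ is the same thing with $X\setminus S_0$ in place of $S_0$. So a valid $Y_H$ exists exactly when some $S_0\in T$ satisfies your two conditions.

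Now take $H\in\mathcal U_+\cap\mathcal U_-$.
\begin{itemize}
\item If $S_0\in H$, choose $S_1\in H$ minimal with $S_1\subseteq S_0$. Then $H\setminus\{S_1\}$ is an upset avoiding $T_2$, so it lies in $\mathcal U_-\subseteq\mathcal V$ and differs from $H$ only at $S_1$. Thus $S_1$ is a first-difference coordinate for \emph{every} ordering of $T$, and it violates $S\not\subseteq S_0$.
\item If $S_0\notin H$, choose $E_1\notin H$ maximal with $E_1\supseteq S_0$. Then $H\cup\{E_1\}$ is an upset containing $T_4$, so it lies in $\mathcal U_+$. Hence $E_1$ is likewise a first-difference coordinate, and it violates $S_0\not\subseteq E$.
\end{itemize}
So none of the $2^{18}=262\,144$ members of $\mathcal U_+\cap\mathcal U_-$ can be doubled. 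This is far beyond your margin of $6\,947$: the construction yields at most $1\,429\,512-262\,144=1\,167\,368<1\,422\,565$ vertices.

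Thinning or enlarging the family does not rescue it. With $S_0$ taken minimal in $H$ or maximal in $T\setminus H$, the map $H\mapsto H\triangle\{S_0\}$ injects the doubled members into upsets of $T$ outside the family. So bottom-level doubling never beats simply using all upsets of $T$, and there are fewer than $10^6$ of those.

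The missing idea is to put the copy bit at the top rather than the bottom, and to label $U$ differently on the two copies. Lemma~\ref{lem:vertex-colors} allows $c_i$ to depend on the vertex, and the paper exploits exactly this. It identifies $U$ with $[8]$ and proceeds as follows.
\begin{itemize}
\item On $\mathcal V_A$ it sends $\{a,b\},J,K$ to $A_1=\{1,2\}$, $A_2=\{3,5,6\}$, $A_3=\{4,7,8\}$.
\item On $\mathcal V_B$ it sends them to $B_1=\{5,6\}$, $B_2=\{7,1,2\}$, $B_3=\{8,3,4\}$.
\item $L_1,\dots,L_4$ put $\mathcal V_B$ first, and $L_5,\dots,L_8$ put $\mathcal V_A$ first.
\item Each vertex is coloured by its own copy's classes.
\end{itemize}
The cross-copy $D$-sets are then $\{1,2,3,4\}\supseteq A_1$ and $\{5,6,7,8\}\supseteq B_1$. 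Every within-copy $D$-set meets the image of $\{a,b\}$ by Lemma~\ref{lem:eight-local}, so all triple conditions hold. Moreover, $\{1,2,3,4\}$ meets each $A_j$ and $\{5,6,7,8\}$ meets each $B_j$, which gives the colour condition. Both copies are used in full, and $1\,429\,512>1\,422\,565$ finishes the proof with no further counting.

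Under a single labelling, two complementary sets cannot both contain $\{a,b\}$. That is precisely what defeats your bottom-level twins.
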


\begin{proof}
Let $\mathcal V_A$ and $\mathcal V_B$ be two disjoint copies of the family in Lemma~\ref{lem:eight-local}.
Define
\[
\begin{array}{lll}
A_1=\{1,2\},&A_2=\{3,5,6\},&A_3=\{4,7,8\},\\
B_1=\{5,6\},&B_2=\{7,1,2\},&B_3=\{8,3,4\}.
\end{array}
\]
Relabel the orders on $\mathcal V_A$ so that the index sets $\{a,b\}$, $J$, and $K$ become $A_1,A_2,A_3$, respectively, and relabel those on $\mathcal V_B$ using $B_1,B_2,B_3$.
For $i\in\{1,2,3,4\}$, let $L_i$ place $\mathcal V_B$ before $\mathcal V_A$; for $i\in\{5,6,7,8\}$, let $L_i$ place $\mathcal V_A$ before $\mathcal V_B$.
Within each part, use its relabelled local order.
For $x\in\mathcal V_A$, define $c_i(x)=j$ when $i\in A_j$; for $x\in\mathcal V_B$, define $c_i(x)=j$ when $i\in B_j$.

Let $x,y,z\in \mathcal V_A\cup\mathcal V_B$ be distinct.
If they lie in the same part, then $D(x,y)\cap D(x,z)\ne\varnothing$ by Lemma~\ref{lem:eight-local}.
If $y,z$ lie in the part not containing $x$, then $D(x,y)=D(x,z)\ne\varnothing$.
In the remaining case, exchange $y$ and $z$ if necessary so that $x,y$ lie in the same part.
If $x,y\in\mathcal V_A$, then $D(x,y)\cap A_1\ne\varnothing$ and $A_1\subseteq D(x,z)$.
If $x,y\in\mathcal V_B$, use $B_1$ in the same way.
Thus condition~\ref{cond:vertex-triples} of Lemma~\ref{lem:vertex-colors} holds.

For $x,y$ in the same part, condition~\ref{cond:vertex-colors} follows from Lemma~\ref{lem:eight-local}.
If $x\in\mathcal V_A$ and $y\in\mathcal V_B$, then $D(x,y)=\{1,2,3,4\}$, which meets every $A_j$.
If $x\in\mathcal V_B$ and $y\in\mathcal V_A$, then $D(x,y)=\{5,6,7,8\}$, which meets every $B_j$.
Hence condition~\ref{cond:vertex-colors} also holds.
Since $|\mathcal V_A\cup\mathcal V_B|=1\,429\,512>1\,422\,565$, choose a subset $\mathcal W\subseteq\mathcal V_A\cup\mathcal V_B$ of size $1\,422\,565$.
Restrict the orders and colorings to $\mathcal W$.
Lemma~\ref{lem:vertex-colors} shows that $Q_8$ has a $(3,8)$-covering.
By~\cite[Proposition~1.2]{BMMV}, $\lambda(7)=1\,422\,564$, and Lemma~\ref{lem:dimension-Rm} now implies $\dim Q_8=8$.
\end{proof}

\begin{theorem}\label{thm:uniform-counterexamples}
For every integer $d\ge8$, the poset $Q_d$ has dimension $d$ and has the $(3,d)$-covering property.
Moreover,
\[
\dim(P\times Q_d)\le\dim P+d-3
\]
for every poset $P$ with $\dim P\ge3$.
\end{theorem}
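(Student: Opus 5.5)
The theorem collects facts already established in this section, so the proof is an assembly of the earlier pieces.

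First, the covering property. For every $d\ge8$ we obtain a $(3,d)$-covering of $Q_d$ by splitting into three cases:
\begin{itemize}
\item for $d\ge10$ we invoke Proposition~\ref{prop:all-d}, which relies on the counting estimate of Lemma~\ref{lem:all-d-count} and on Lemma~\ref{lem:zero-one-matrix};
\item for $d=9$ we use Proposition~\ref{prop:nine};
\item for $d=8$ we use Proposition~\ref{prop:eight}.
\end{itemize}
These three cases cover every integer $d\ge8$.

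Second, the dimension. The lower bound $\dim Q_d\ge d$ comes from the monotonicity of $\lambda$ shown at the start of this section. Since $\lambda(q)\le\lambda(d-1)<\lambda(d-1)+1$ for all $q<d$, Lemma~\ref{lem:dimension-Rm} gives
\[
\dim Q_d=\min\{q:\lambda(q)\ge\lambda(d-1)+1\}\ge d.
\]
For the upper bound, the $d$ linear extensions in the $(3,d)$-covering form a realizer of $Q_d$, so $\dim Q_d\le d$. Hence $\dim Q_d=d$.

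Third, the product bound. Let $P$ satisfy $\dim P\ge3$. Then $\max\{\dim P,3\}=\dim P$, and Proposition~\ref{prop:cover-bound} with $r=3$ and $s=d$ gives
\[
\dim(P\times Q_d)\le\dim P+d-3.
\]

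The real obstacle is the existence of the coverings, which are the substantive constructions proved earlier, especially the small cases $d=8,9$. Given those results, the remaining work is bookkeeping: check that the three cases exhaust $d\ge8$, and check that the definition of a covering in Definition~\ref{def:covering} requires the $L_i$ to form a realizer.
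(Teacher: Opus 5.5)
Your proposal is correct and matches the paper's proof. Like the paper, you get the covering from the three cases $d\ge10$, $d=9$, $d=8$. You get $\dim Q_d\ge d$ from the monotonicity of $\lambda$ together with Lemma~\ref{lem:dimension-Rm}, and $\dim Q_d\le d$ from the realizer contained in the covering. The product bound then follows from Proposition~\ref{prop:cover-bound} with $r=3$ and $s=d$.
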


\begin{proof}
For $d\ge10$, Proposition~\ref{prop:all-d} establishes that $Q_d$ has a $(3,d)$-covering.
The cases $d=9$ and $d=8$ are established by Propositions~\ref{prop:nine} and~\ref{prop:eight}, respectively.
Thus $Q_d$ has a $(3,d)$-covering for every integer $d\ge8$, and the theorem follows from the argument above.
\end{proof}

\begin{corollary}\label{cor:universal-KT}
A poset $P$ satisfies
\[
\dim(P\times Q)\ge\dim P+\dim Q-2
\]
for every poset $Q$ if and only if $\dim P\le2$.
\end{corollary}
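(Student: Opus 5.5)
The statement has two directions, and I plan to handle them separately; Theorem~\ref{thm:uniform-counterexamples} does essentially all the work.

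For the ``if'' direction, suppose $\dim P\le2$ and let $Q$ be any poset. I will use the folklore lower bound $\dim(P\times Q)\ge\max\{\dim P,\dim Q\}\ge\dim Q$. Since $\dim P\le2$, we have $\dim Q\ge\dim P+\dim Q-2$, which is the required inequality. The lower bound itself comes from the fact that $Q$ embeds into $P\times Q$ via $y\mapsto(x_0,y)$ for a fixed $x_0\in P$. This is the same observation behind the remark in the introduction that $\KT(n,m)$ holds whenever $\min\{n,m\}\le2$.

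For the ``only if'' direction, I argue by contraposition. Suppose $\dim P\ge3$ and take $Q=Q_8$. By Theorem~\ref{thm:uniform-counterexamples}, $\dim Q_8=8$, and the theorem also gives $\dim(P\times Q_8)\le\dim P+8-3=\dim P+5$. This is strictly smaller than $\dim P+\dim Q_8-2=\dim P+6$, so $Q_8$ is a poset for which the displayed inequality fails.

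I do not expect any step to be a real obstacle. Both directions are immediate consequences of results already in hand: the folklore bound for the ``if'' direction, and the covering property of $Q_8$ (via Proposition~\ref{prop:cover-bound}, packaged in Theorem~\ref{thm:uniform-counterexamples}) for the ``only if'' direction. The one point to check is that the bound in Theorem~\ref{thm:uniform-counterexamples} holds for every $P$ with $\dim P\ge3$, which matches exactly the range of $P$ in this direction.
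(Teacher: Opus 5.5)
Your proposal is correct and matches the paper's proof: the ``if'' direction uses $\dim(P\times Q)\ge\dim Q\ge\dim P+\dim Q-2$, and the ``only if'' direction applies Theorem~\ref{thm:uniform-counterexamples} with $d=8$ to get $\dim(P\times Q_8)\le\dim P+5<\dim P+\dim Q_8-2$. Your justification of the lower bound via the embedding $y\mapsto(x_0,y)$ is a detail the paper leaves implicit.
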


\begin{proof}
If $\dim P\le2$, then
\[
\dim(P\times Q)\ge\dim Q\ge\dim P+\dim Q-2
\]
for every poset $Q$.
If $\dim P\ge3$, then Theorem~\ref{thm:uniform-counterexamples} with $d=8$ gives
\[
\dim(P\times Q_8)\le\dim P+5
<\dim P+\dim Q_8-2.
\]
\end{proof}

\begin{table}[htbp]
\centering
\caption{The status of $\KT(n,m)$ for $n,m\ge3$.}
\label{tab:KT-status}
\begin{tabular}{c c c c c c c}
\toprule
$n\backslash m$ & $3$ & $4$ & $5$ & $6$ & $7$ & $m\ge8$\\
\midrule
$3$       & $\checkmark$ & $?$ & $?$ & $?$ & $?$ & $\times$\\
$4$       & $?$ & $?$ & $?$ & $?$ & $?$ & $\times$\\
$5$       & $?$ & $?$ & $?$ & $?$ & $?$ & $\times$\\
$6$       & $?$ & $?$ & $?$ & $?$ & $?$ & $\times$\\
$7$       & $?$ & $?$ & $?$ & $?$ & $?$ & $\times$\\
$n\ge8$  & $\times$ & $\times$ & $\times$ & $\times$ & $\times$ & $\times$\\
\bottomrule
\end{tabular}
\end{table}

In Table~\ref{tab:KT-status}, $\checkmark$ denotes a true assertion, $\times$ denotes a false assertion, and $?$ denotes a case not settled here.
The entry $\KT(3,3)$ follows from~\cite{DW}, and every entry marked $\times$ follows from Theorem~\ref{thm:uniform-counterexamples}.
The table is symmetric because $\KT(n,m)\Longleftrightarrow\KT(m,n)$.
Cases with $\min\{n,m\}\le2$ are true and are not displayed.

\medskip
\noindent\textbf{Problem.}
Determine whether $\KT(n,m)$ holds for $3\le n\le m\le7$ with $(n,m)\ne(3,3)$.

\section{Coverings and finite grids}\label{sec:coverings}

In this section, we prove Theorem~\ref{thm:characterization}.
Fix a poset $Q$ and positive integers $r,s$.
Let $\QQ$ be the chain of rational numbers, and let $\zero=(0,\ldots,0)$ and $\one=(1,\ldots,1)$.
For $a\in\QQ^r$, let $\sgn(a)\in\{-1,0,1\}^r$ be its coordinatewise sign vector.
Linear extensions and realizers of $Q\times\QQ^r$ are understood in the usual sense.

\begin{lemma}\label{lem:ramsey}
For every positive integer $h$, there is a computable integer $N=N(h,r,s)\ge2$ with the following property.
If $|Q|=h$ and $L_1,\ldots,L_s$ is a realizer of $Q\times[N]^r$, then there are sets $A_1,\ldots,A_r\subseteq[N]$, each of size three, such that for $a,b\in A_1\times\cdots\times A_r$, whether $(x,a)<_{L_i}(y,b)$ holds depends only on $i,x,y$, and $\sgn(b-a)$.
\end{lemma}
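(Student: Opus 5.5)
This is a product Ramsey argument. The idea is to encode the relevant comparisons of a realizer as a finite coloring of tuples of grid coordinates, then extract a small homogeneous subgrid. Concretely, let $L_1,\ldots,L_s$ be a realizer of $Q\times[N]^r$. For each $i\in[s]$, each pair $x,y\in Q$, and each pair $a,b\in[N]^r$, record the bit saying whether $(x,a)<_{L_i}(y,b)$ holds. This gives at most $2^{sh^2}$ possible "types" for a pair $(a,b)$, where the type lists these bits over all $i,x,y$.

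We want a subgrid $A_1\times\cdots\times A_r$ with $|A_k|=3$ on which the type of $(a,b)$ depends only on $\sgn(b-a)$. The natural tool is the product Ramsey theorem (Graham–Rothschild–Spencer). Since the relation concerns two points, the coloring should be on pairs of coordinates from each factor. To make it well defined, for each coordinate $k$ I would use the unordered pair $\{a_k,b_k\}$, of size $1$ or $2$. Equivalently, I color the $r$-tuples $(S_1,\ldots,S_r)$, where each $S_k\subseteq[N]$ has size $1$ or $2$. Such a tuple, together with a choice of which point of each doubleton is $a_k$ and which is $b_k$, determines the pair $(a,b)$ up to the pattern encoded by $\sgn(b-a)$.

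So assign to each $r$-tuple of $2$-subsets $(T_1,\ldots,T_r)$, with $T_k=\{u_k<v_k\}$, the color recording, for every sign vector $\sigma\in\{-1,0,1\}^r$ and every $i,x,y$, the comparison bit for the pair $(a,b)$ defined as follows. If $\sigma_k=1$, set $a_k=u_k$ and $b_k=v_k$. If $\sigma_k=-1$, set $a_k=v_k$ and $b_k=u_k$. If $\sigma_k=0$, set $a_k=b_k=u_k$.

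Here $\sigma_k=0$ uses only $u_k$, so the coloring loses nothing. Every pair $(a,b)$ in a subgrid arises from some $2$-subsets $T_k$ of the corresponding $A_k$ together with some $\sigma$. The only care needed is the case $a_k=b_k$: we must be able to choose a partner $v_k>a_k$ in $A_k$. That fails for the largest element of $A_k$.

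To fix this, I would take homogeneous sets of size $4$ and then delete the maximum of each, leaving $A_k$ of size $3$. Then for $a_k=b_k\in A_k$ there is always a larger $v_k$ in the original $4$-set. Homogeneity of the color on all tuples of $2$-subsets from the $4$-sets then shows that the comparison bit depends only on $i,x,y,\sigma$. This holds because any two representations of pairs with the same $\sigma$ come from tuples of $2$-subsets that receive the same color.

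The product Ramsey theorem for $r$-tuples of $2$-subsets, with $c=2^{sh^2\cdot 3^r}$ colors and target sizes $4$, gives a computable $N$. One can get it, for instance, by iterating the ordinary Ramsey theorem factor by factor with explicit Erdős–Rado type bounds, each step refining the color by the remaining coordinates. We also take $N\ge 4$, so $N\ge2$.

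The main obstacle is the bookkeeping for the product Ramsey step: making sure the coloring genuinely captures every pair $(a,b)$ with a given sign pattern, including the coordinates where $a_k=b_k$. The "size $4$ then delete the top element" trick handles this. Citing a computable form of product Ramsey handles $N$.
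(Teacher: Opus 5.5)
Your proof is correct, but it takes a different route from the paper's.

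\textbf{Your route.} You form one coloring of tuples $(T_1,\ldots,T_r)$ of $2$-subsets of $[N]$, with $2^{sh^2\cdot 3^r}$ colors and one bit for each $\sigma,i,x,y$. You apply the product Ramsey theorem to get $4$-sets $B_1,\ldots,B_r$ on which all such tuples have the same color. Coordinates with $\sigma_k=0$ are handled by deleting $\max B_k$, so that every $a_k=b_k\in A_k$ has a partner $v_k>a_k$ in $B_k$.

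\textbf{The paper's route.} It uses only the classical Ramsey theorem on $k$-subsets of a single chain $[N]$. For each $\varepsilon\in\{-1,0,1\}^r$ it colors $k_\varepsilon$-subsets of $[N]$. Here $k_\varepsilon$ counts one element for each coordinate with $\varepsilon_j=0$ and two elements otherwise, and the coordinate entries are read off consecutively from the increasing listing of the set. It applies Ramsey $3^r$ times in succession, obtaining a set $H$ of size $3r$ that is homogeneous for all these colorings at once. It then cuts $H$ into consecutive triples $A_1<\cdots<A_r$.

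Because the blocks are ordered, the set of distinct entries of $a$ and $b$ determines which entries belong to which coordinate. This is why no product Ramsey theorem is needed. Because a zero coordinate contributes a single element, no padding is needed either.

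\textbf{Comparison.} The paper's approach is self-contained and gives an explicit recursion $N_{\ell-1}=R(k_{\varepsilon_\ell},N_\ell,c)$ for $N$. Yours uses a single uniform coloring and is conceptually cleaner, at the cost of citing the product Ramsey theorem as a black box. Its standard proof is an iteration of ordinary Ramsey, so computability of $N$ is unaffected. Your sets $A_k$ may overlap or interleave, which the statement allows.
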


\begin{proof}
Fix $\varepsilon\in\{-1,0,1\}^r$.
For $j\in[r]$, let $\nu_j=1$ if $\varepsilon_j=0$ and let $\nu_j=2$ otherwise, and let $k_\varepsilon=\sum_{j=1}^r\nu_j$.
For each $S\in\binom{[N]}{k_\varepsilon}$, list the elements of $S$ increasingly and assign the first $\nu_1$ elements to coordinate $1$, the next $\nu_2$ elements to coordinate $2$, and so on.
If $\nu_j=1$, let $a_j=b_j$ be the assigned element.
If $\nu_j=2$, assign the two elements to $a_j,b_j$ so that $\sgn(b_j-a_j)=\varepsilon_j$.
Color $S$ by the truth values of $(x,a)<_{L_i}(y,b)$ for $i\in[s]$ and $x,y\in Q$.
There are at most $c=2^{sh^2}$ colors.

Let $R(k,t,c)$ be the least integer $v$ such that every $c$-coloring of $\binom{[v]}{k}$ has a monochromatic $t$-element subset.
List the vectors in $\{-1,0,1\}^r$ as $\varepsilon_1,\ldots,\varepsilon_M$, where $M=3^r$, and define
\[
N_M=3r,\qquad
N_{\ell-1}=R(k_{\varepsilon_\ell},N_\ell,c)
\quad(\ell=M,M-1,\ldots,1),
\qquad
N=N_0.
\]
By successive applications of the finite Ramsey theorem, there is a set $H\subseteq[N]$ of size $3r$ on which every one of these colorings is constant.
Partition $H$ into sets $A_1,\ldots,A_r$ of size three so that every element of $A_j$ is smaller than every element of $A_{j+1}$.
For $a,b\in A_1\times\cdots\times A_r$, let $\varepsilon=\sgn(b-a)$.
The distinct entries occurring in $a$ and $b$ form the $k_\varepsilon$-element set used in the coloring for $\varepsilon$.
Thus the comparison of $(x,a)$ and $(y,b)$ in $L_i$ depends only on $i,x,y$, and $\sgn(b-a)$.
\end{proof}

For Lemmas~\ref{lem:rational}--\ref{lem:extraction}, let
$N=N(|Q|,r,s)$ be the integer in Lemma~\ref{lem:ramsey}.
Suppose that $\dim(Q\times[N]^r)\le s$, and let
$L_1,\ldots,L_s$ be linear extensions, not necessarily distinct, forming a realizer of $Q\times[N]^r$.
By Lemma~\ref{lem:ramsey}, there are sets $A_1,\ldots,A_r\subseteq[N]$, each of size $3$, with the stated property.
Identify each $A_j$ increasingly with $[3]$, and denote the restrictions of $L_1,\ldots,L_s$ to $Q\times[3]^r$ by the same symbols.

\begin{lemma}\label{lem:rational}
There are linear orders $\widehat L_1,\ldots,\widehat L_s$ forming a realizer of $Q\times\QQ^r$ such that, for every $i\in[s]$, the comparison of $(x,a)$ and $(y,b)$ in $\widehat L_i$ depends only on $x,y$, and $\sgn(b-a)$.
\end{lemma}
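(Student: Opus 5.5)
The plan is to transfer the orders $L_1,\ldots,L_s$, restricted to $Q\times[3]^r$, to $Q\times\QQ^r$ by \emph{pattern realization}. The basic tool is this: for $a,b\in\QQ^r$, call $\alpha,\beta\in[3]^r$ a \emph{realization} of $(a,b)$ if $\sgn(\beta-\alpha)=\sgn(b-a)$. We define
\[
(x,a)<_{\widehat L_i}(y,b)
\quad\Longleftrightarrow\quad
(x,\alpha)<_{L_i}(y,\beta)
\]
for some, equivalently every, realization $(\alpha,\beta)$ of $(a,b)$. Realizations always exist: in coordinate $j$, take $\alpha_j=\beta_j=2$ if $a_j=b_j$, $(\alpha_j,\beta_j)=(1,2)$ if $a_j<b_j$, and $(2,1)$ otherwise. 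By Lemma~\ref{lem:ramsey}, after identifying each $A_j$ with $[3]$, the truth value of the defining condition does not depend on which realization is chosen. So $\widehat L_i$ is well defined, and by construction the comparison depends only on $x,y$ and $\sgn(b-a)$.

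Next I check that each $\widehat L_i$ is a linear order. The key observation is that any \emph{finite} configuration of points of $\QQ^r$ with at most three distinct values in each coordinate can be realized simultaneously. Given $a,b,c\in\QQ^r$, in each coordinate $j$ the values $a_j,b_j,c_j$ take at most three distinct values. Map them order-preservingly into $[3]$ to get $\alpha,\beta,\gamma\in[3]^r$, so that every pair among $\alpha,\beta,\gamma$ has the same sign pattern as the corresponding pair among $a,b,c$. This is exactly why the $A_j$ were chosen of size three.

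The three order axioms then follow from this observation.
\begin{enumerate}
\item Transitivity: for $(x,a),(y,b),(z,c)$, realize $a,b,c$ simultaneously by $\alpha,\beta,\gamma$. The relations in $\widehat L_i$ become relations in $L_i$ among $(x,\alpha),(y,\beta),(z,\gamma)$, and $L_i$ is transitive.
\item Antisymmetry and totality: realize the pair $a,b$ simultaneously. If $(x,a)\ne(y,b)$, then $(x,\alpha)\ne(y,\beta)$; this is clear if $a\ne b$, and if $a=b$ then $\alpha=\beta$ but $x\ne y$. Exactly one of the two strict relations holds in $L_i$.
\item Reflexivity: this is trivial.
\end{enumerate}
I expect this consistency step, rather than any computation, to be the only real point to watch. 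In particular, one must check that comparing $(y,b)$ with $(x,a)$ uses the swapped realization $(\beta,\alpha)$, which is legitimate because $\sgn(a-b)=-\sgn(b-a)$.

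Finally, I show that $\widehat L_1,\ldots,\widehat L_s$ form a realizer of $Q\times\QQ^r$.
\begin{enumerate}
\item Extension: if $(x,a)\le(y,b)$, then $x\le_Q y$ and $b-a\ge0$ coordinatewise. Any realization then has $\alpha\le\beta$, so $(x,\alpha)\le(y,\beta)$ in $Q\times[3]^r$. Since $L_i$ is a linear extension, $(x,a)\le_{\widehat L_i}(y,b)$.
\item Incomparable pairs are reversed: if $(x,a)\not\le(y,b)$, then either $x\not\le_Q y$ or $a_j>b_j$ for some $j$. In either case $(x,\alpha)\not\le(y,\beta)$ in $Q\times[3]^r$. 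Because the restrictions of $L_1,\ldots,L_s$ form a realizer of the subposet $Q\times[3]^r$, some $L_i$ has $(y,\beta)<_{L_i}(x,\alpha)$, and hence $(y,b)<_{\widehat L_i}(x,a)$.
\end{enumerate}
This completes the plan.
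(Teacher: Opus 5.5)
Your proposal is correct and follows the paper's proof. Both arguments define $\widehat L_i$ by pulling back $L_i$ along any sign-preserving representative in $[3]^r$, and use Lemma~\ref{lem:ramsey} for well-definedness. Both then use a simultaneous order-preserving placement of two vectors into $[3]^r$ to get totality, the extension property and the realizer property, and of three vectors to get transitivity; your write-up spells out details the paper leaves implicit, such as using the swapped realization for the reverse comparison.
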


\begin{proof}
For distinct $(x,a),(y,b)\in Q\times\QQ^r$, choose $a',b'\in[3]^r$ with $\sgn(b'-a')=\sgn(b-a)$, and define $(x,a)<_{\widehat L_i}(y,b)$ if and only if $(x,a')<_{L_i}(y,b')$.
By Lemma~\ref{lem:ramsey}, this is independent of the choice of $a',b'$.

For two vectors in $\QQ^r$, the values occurring in each coordinate admit an increasing map into $[3]$.
Hence $\widehat L_i$ compares every two distinct elements, extends the strict product order, and the intersection of $\widehat L_1,\ldots,\widehat L_s$ is the strict product order.
For three vectors in $\QQ^r$, the values occurring in each coordinate admit an increasing map into $[3]$ preserving all three pairwise comparisons.
Thus each $\widehat L_i$ is transitive, and $\widehat L_1,\ldots,\widehat L_s$ form a realizer of $Q\times\QQ^r$.
\end{proof}

Fix $i\in[s]$ and write $<$ for $<_{\widehat L_i}$.
For $x\in Q$, call $j\in[r]$ a \emph{principal coordinate} of the fiber $\{x\}\times\QQ^r$ if $(x,a)<(x,b)$ whenever $a_j<b_j$.

\begin{lemma}\label{lem:principal}
Each fiber has a unique principal coordinate.
\end{lemma}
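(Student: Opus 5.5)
The plan is to encode the restriction of $<=<_{\widehat L_i}$ to a single fiber $\{x\}\times\QQ^r$ by a set of sign vectors and argue purely combinatorially. By Lemma~\ref{lem:rational}, whether $(x,a)<(x,b)$ holds depends only on $\varepsilon=\sgn(b-a)$. Let $\Pi\subseteq\{-1,0,1\}^r\setminus\{\zero\}$ be the set of $\varepsilon$ for which it holds. The case $r=1$ is trivial, since then coordinate $1$ is principal because $<$ extends the product order; so assume $r\ge2$.

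I will use three properties of $\Pi$.
\begin{enumerate}
\item Totality and antisymmetry: for $\varepsilon\ne\zero$, exactly one of $\varepsilon$ and $-\varepsilon$ lies in $\Pi$.
\item Extension: every nonzero $\varepsilon\ge\zero$ lies in $\Pi$, since $\widehat L_i$ extends the product order.
\item Transitivity: if $\varepsilon,\delta\in\Pi$, then $\Pi$ contains every sign vector $\eta$ realizable as $\sgn(c-a)$ with $\sgn(b-a)=\varepsilon$ and $\sgn(c-b)=\delta$. Coordinatewise, $\eta_k$ is forced when $\varepsilon_k,\delta_k$ are not of opposite signs (it is the nonzero one, or $0$), and $\eta_k$ is arbitrary when they are opposite.
\end{enumerate}
In this language, $j$ is principal exactly when every $\varepsilon$ with $\varepsilon_j=1$ lies in $\Pi$.

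Uniqueness. If $j\ne k$ were both principal, pick $\varepsilon$ with $\varepsilon_j=1$ and $\varepsilon_k=-1$. Principality of $j$ gives $\varepsilon\in\Pi$. Principality of $k$ applied to $-\varepsilon$ gives $-\varepsilon\in\Pi$. This contradicts (i).

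Existence. Let $w_j$ denote the vector with $+1$ in coordinate $j$ and $-1$ elsewhere.
\begin{itemize}
\item First I show that some $w_j\in\Pi$. If not, then $-w_1,\ldots,-w_r\in\Pi$ by (i). Combine $-w_1$ with $-w_2$ via (iii): coordinates $1$ and $2$ have opposite signs and become free, while the others stay $+1$. So every vector that is $+1$ off $\{1,2\}$ lies in $\Pi$, in particular $(-1,-1,1,\ldots,1)$.
\item Combining this with $-w_3$ frees coordinates $1,2,3$. Iterating through $-w_r$ puts $(-1,\ldots,-1)$ in $\Pi$. But $(1,\ldots,1)\in\Pi$ by (ii), contradicting (i).
\item Now fix $j$ with $w_j\in\Pi$ and any $\varepsilon$ with $\varepsilon_j=1$. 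I choose $\delta\ge\zero$ with $\delta_j=0$, so that $\delta\in\Pi$ by (ii) whenever $\delta\ne\zero$ (if $\delta=\zero$, then $\varepsilon=w_j$ already). For $k\ne j$, set $\delta_k=0$ if $\varepsilon_k=-1$ and $\delta_k=+1$ otherwise. Then $w_j$ and $\delta$ combine under (iii) to give $\varepsilon$: coordinate $j$ is forced to $1$, coordinates with $\delta_k=0$ are forced to $-1$, and coordinates with $\delta_k=+1$ are opposite to $w_j$ and hence free.
\end{itemize}
Hence $\varepsilon\in\Pi$, and $j$ is principal.

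The main obstacle is making the transitivity rule (iii) precise, that is, checking that every claimed $\eta$ is actually realized by rationals $a,b,c$. This is immediate coordinatewise: when $\varepsilon_k$ and $\delta_k$ are opposite, place $c_k$ below, equal to, or above $a_k$ as needed. Once (iii) holds, the rest is bookkeeping with sign vectors.
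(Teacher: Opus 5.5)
Your proof is correct. The uniqueness part matches the paper's; existence takes a genuinely different route.

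\textbf{The paper's route.} It works with actual vectors. It observes that the positive set $\{v:(x,\zero)<(x,v)\}$ is closed under addition. It then defines a tournament on $[r]$ by $t\triangleleft j$ iff $e_j-e_t$ is positive, and proves this tournament is transitive via $e_k-e_t=(e_k-e_j)+(e_j-e_t)$. It takes the $\triangleleft$-maximum $j$ and shows $j$ is principal by writing any $v$ with $v_j>0$ as a sum of the $r-1$ positive vectors $\frac{v_j}{r-1}e_j+v_te_t$, each supported on $\{j,t\}$.

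\textbf{Your route.} You stay inside the finite set of sign vectors. You use the one-against-all vectors $w_j$ instead of the pairwise differences $e_j-e_t$. You get some $w_j\in\Pi$ from a single contradiction, chaining $-w_1,\ldots,-w_r$ down to $-\one$. You then reach every $\varepsilon$ with $\varepsilon_j=1$ by one composition with a nonnegative vector.

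\textbf{What each buys.} Your argument never needs transitivity of a coordinate tournament, and it yields the clean criterion that $j$ is principal iff $w_j\in\Pi$. Your rule (iii) is the sign-level counterpart of the paper's additive closure: the free opposite-sign coordinates play the role of the paper's rescaling. The paper's version exploits the linear structure of $\QQ^r$, so the final verification is a one-line decomposition rather than coordinatewise sign bookkeeping. It also exhibits a full ranking $\triangleleft$ of the coordinates with the principal one on top, though that extra structure is not used later.
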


\begin{proof}
Fix $x\in Q$.
For $v\in\QQ^r$, write $v\succ0$ if $(x,\zero)<(x,v)$.
By Lemma~\ref{lem:rational}, we have $(x,a)<(x,b)$ if and only if $b-a\succ0$.
If $u,v\succ0$, then $(x,\zero)<(x,u)<(x,u+v)$, and hence $u+v\succ0$.
Every nonzero vector with nonnegative coordinates is positive.

Let $e_1,\ldots,e_r$ be the standard basis vectors, and define $t\triangleleft j$ if $e_j-e_t\succ0$.
For distinct $t,j\in[r]$, exactly one of $t\triangleleft j$ and $j\triangleleft t$ holds by totality.
If $t\triangleleft j$ and $j\triangleleft k$, then $e_k-e_t=(e_k-e_j)+(e_j-e_t)\succ0$, so $t\triangleleft k$.
Thus $\triangleleft$ is a strict linear order on $[r]$.
Let $j$ be its greatest element.

If $r=1$, then $j$ is principal.
Let $r\ge2$ and let $v\in\QQ^r$ satisfy $v_j>0$.
Write
\[
v=\sum_{t\ne j}\left(\frac{v_j}{r-1}e_j+v_te_t\right).
\]
If $v_t\ge0$, the corresponding summand is positive.
If $v_t<0$, its sign vector is that of $e_j-e_t$, which is positive since $t\triangleleft j$.
Thus every summand is positive, and hence $v\succ0$.
Therefore $j$ is principal.

Suppose that distinct $j,t\in[r]$ are both principal.
Choose $a,b\in\QQ^r$ with $a_j<b_j$ and $a_t>b_t$.
Then $(x,a)<(x,b)$ and $(x,b)<(x,a)$, a contradiction.
\end{proof}

Let $j(x)$ be the principal coordinate of the fiber over $x$, and let $\delta_x(u)=(x,u\one)$ for $u\in\QQ$.
Write $x\ll y$ if every point of the fiber over $x$ precedes every point of the fiber over $y$, and write $x\sim y$ if neither $x\ll y$ nor $y\ll x$ holds.

\begin{lemma}\label{lem:classes}
The relation $\sim$ is an equivalence relation, and $\ll$ linearly orders its classes.
All fibers over a class $E$ have the same principal coordinate $j(E)$.
Moreover,
\begin{equation}\label{eq:class-principal}
x,y\in E,\quad a_{j(E)}<b_{j(E)}
\quad\Longrightarrow\quad
(x,a)<(y,b).
\end{equation}
\end{lemma}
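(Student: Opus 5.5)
The argument reduces everything to how $<$ compares diagonal points $\delta_x(u)$ and $\delta_y(v)$, and then uses translation invariance from Lemma~\ref{lem:rational}.

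\textbf{Step 1: reduction to $u<v$.} Since fibers are ordered coordinatewise in $Q\times\QQ^r$, every fiber has no least or greatest element in $<$; also each $(x,a)$ lies between $\delta_x(u)$ and $\delta_x(v)$ for suitable $u<v$. Hence $x\ll y$ holds if and only if $\delta_x(v)<\delta_y(u)$ for all $u,v$. By Lemma~\ref{lem:rational}, the comparison of $\delta_x(v)$ with $\delta_y(u)$ depends only on $\sgn(u-v)$. So $x\ll y$ is equivalent to $\delta_x(1)<\delta_y(0)$, i.e. to: $\delta_x(v)<\delta_y(u)$ even when $u<v$.

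\textbf{Step 2: $\sim$ is an equivalence relation.} Reflexivity and symmetry are clear, and $\ll$ is transitive. Suppose $x\sim y\sim z$ but, say, $x\ll z$. The relation $y\sim z$ means some point of the $z$-fiber precedes some point of the $y$-fiber. Pick $a,b$ with $(z,a)<(y,b)$. Since $x\ll z$, we get $(x,c)<(y,b)$ for every $c$. Taking $c$ with $c>b$ coordinatewise, the sign-vector reduction (comparisons depend only on $\sgn$) gives $(x,c')<(y,b')$ whenever $c'>b'$. Every fiber point lies below some larger point, so all of the $x$-fiber precedes the $y$-fiber, i.e. $x\ll y$, a contradiction. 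The case $z\ll x$ is symmetric. Once $\sim$ is an equivalence, totality of $<$ and transitivity make $\ll$ a linear order on the classes, and well-definedness on classes follows by the same kind of argument.

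\textbf{Step 3: common principal coordinate, and \eqref{eq:class-principal}.} Let $x\sim y$, so neither fiber lies entirely before the other. By Lemma~\ref{lem:rational}, the truth of $(x,a)<(y,b)$ is a function $\phi(\sgn(b-a))$. I aim to show:
\[
\phi(\varepsilon)=\text{true whenever }\varepsilon_{j(x)}=1,\qquad \phi(\varepsilon)=\text{false whenever }\varepsilon_{j(y)}=-1.
\]
Suppose $j(x)=j$ and $\varepsilon_j=1$, but $(y,b)<(x,a)$. Composing with the principal ordering inside the $x$-fiber, $(x,a)<(x,a')$ for all $a'$ with $a'_j>a_j$ arbitrary elsewhere. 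Using the $\sgn$-invariance to move $b$ freely in the coordinates other than $j$, I would show that the $y$-fiber lies below all of the $x$-fiber. That gives $y\ll x$, a contradiction. Symmetrically, $\varepsilon_{j(y)}=-1$ forces false. If $j(x)\ne j(y)$, choose $\varepsilon$ with $\varepsilon_{j(x)}=1$ and $\varepsilon_{j(y)}=-1$; then $\phi(\varepsilon)$ would be both true and false, a contradiction. So $j(x)=j(y)$, and the first displayed property is exactly \eqref{eq:class-principal}.

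\textbf{Main obstacle.} The delicate step is the ``moving'' argument in Step 3: deducing $y\ll x$ from a single bad comparison $(y,b)<(x,a)$ with $\varepsilon_j=1$. My first attempt is to use the fact that the set of $b-a$ giving $(x,a)<(y,b)$ is a union of sign classes closed under adding positive vectors. Combined with principality within each fiber, this should force a single threshold on coordinate $j$.
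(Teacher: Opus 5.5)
Your Steps 1 and 2 are sound. Step 2 proves transitivity of $\sim$ by a different route from the paper. You show that $x\ll z$, together with some $z$-point preceding some $y$-point, forces $x\ll y$; the paper instead chains $\delta_x(0)<\delta_y(1)<\delta_z(2)$ through its diagonal criterion. Both work. The two claims you set up in Step 3 are also true, and they do yield $j(x)=j(y)$ and \eqref{eq:class-principal}.

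The gap is the implication that carries all the content: that a single comparison $(y,b)<(x,a)$ with $\varepsilon_{j(x)}=1$ forces $y\ll x$. You only announce it (``I would show''). The route you sketch for it is also off in two ways. Sign-invariance does not let you move $b$ alone in the coordinates other than $j$: it fixes $\sgn(b-a)$, so $a$ and $b$ must move together. And no threshold argument via closure under adding positive vectors is needed.

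The missing step is short once you aim at the diagonal test from your own Step 1.
\begin{itemize}
\item[] \emph{First claim.} Let $j=j(x)$ and $\varepsilon_j=1$. Put $b=\one$, $a_j=-1$, and $a_k=1-\varepsilon_k$ for $k\ne j$. Then $\sgn(b-a)=\varepsilon$, so the bad comparison transfers to $(y,\one)<(x,a)$. Principality of $j$ in the $x$-fiber gives $(x,a)<(x,\zero)$. Hence $\delta_y(1)<\delta_x(0)$, which is $y\ll x$ by Step 1.
\item[] \emph{Second claim.} Let $t=j(y)$, $\varepsilon_t=-1$, and suppose $(x,a)<(y,b)$. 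Take $a=\one$, $b_t=-1$, and $b_k=1+\varepsilon_k$ for $k\ne t$. This gives $\delta_x(1)<(y,b)<\delta_y(0)$, i.e.\ $x\ll y$.
\end{itemize}
Each of these uses the principal coordinate of only one fiber at a time.

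The paper argues the same point in the forward direction. For $x\sim y$ and $a_{j(x)}<u<v<b_{j(y)}$, it sandwiches $(x,a)<\delta_x(u)<\delta_y(v)<(y,b)$ using both principal coordinates at once. It then gets the contradiction from one explicit pair with $\sgn(b-a)=\one$ but $a_{j(x)}>b_{j(y)}$. With the step above filled in, your argument is complete.
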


\begin{proof}
If $u<\min_j a_j$ and $v>\max_j a_j$, then $\delta_x(u)<(x,a)<\delta_x(v)$.
Hence $x\ll y$ if and only if $\delta_x(u)<\delta_y(v)$ for all $u,v\in\QQ$.

Suppose that $\delta_y(v)<\delta_x(u)$ for some $u<v$.
By Lemma~\ref{lem:rational}, the same inequality holds for every pair $u<v$.
For $c,d\in\QQ$, choose $u<\min\{c,d\}$ and $v>\max\{c,d\}$.
Then $\delta_y(d)<\delta_y(v)<\delta_x(u)<\delta_x(c)$, so $y\ll x$.
Thus
\begin{equation}\label{eq:diagonal}
x\sim y
\quad\Longleftrightarrow\quad
\delta_x(u)<\delta_y(v)\text{ and }\delta_y(u)<\delta_x(v)
\text{ for all }u<v.
\end{equation}

The relation $\sim$ is reflexive and symmetric.
If $x\sim y$ and $y\sim z$, then $\delta_x(0)<\delta_y(1)<\delta_z(2)$ and $\delta_z(0)<\delta_y(1)<\delta_x(2)$.
By Lemma~\ref{lem:rational}, these inequalities imply~\eqref{eq:diagonal} for $x,z$.
Thus $\sim$ is transitive.

The relation $\ll$ is transitive.
Suppose that $x\sim x'$ and $x\ll y$.
If $x'\sim y$, then $x\sim y$, while $y\ll x'$ would imply $x\ll x'$.
Thus $x'\ll y$.
The same argument for the second class shows that $\ll$ linearly orders the classes.

Let $x\sim y$.
If $a_{j(x)}<b_{j(y)}$, choose $u,v\in\QQ$ with $a_{j(x)}<u<v<b_{j(y)}$.
Then $(x,a)<\delta_x(u)<\delta_y(v)<(y,b)$.
Similarly, $a_{j(x)}>b_{j(y)}$ implies $(y,b)<(x,a)$.

Suppose that $j=j(x)\ne j(y)=t$.
Choose $a,b\in\QQ^r$ with $(a_j,b_j)=(2,3)$, $(a_t,b_t)=(-2,-1)$, and $(a_\ell,b_\ell)=(0,1)$ for $\ell\notin\{j,t\}$.
Since $a_j>b_t$, we have $(y,b)<(x,a)$.
On the other hand, $\sgn(b-a)=\one$, so by~\eqref{eq:diagonal} we have $(x,a)<(y,b)$, a contradiction.
Thus $j(x)=j(y)$, and~\eqref{eq:class-principal} follows.
\end{proof}

\begin{lemma}\label{lem:extraction}
The poset $Q$ has an $(r,s)$-covering.
\end{lemma}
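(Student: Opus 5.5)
The plan is to read the covering directly off the structured realizer $\widehat L_1,\ldots,\widehat L_s$ of $Q\times\QQ^r$ from Lemma~\ref{lem:rational}, using the class decomposition of Lemma~\ref{lem:classes}. Fix $i\in[s]$. Let $C_i$ be the finite chain of $\sim$-classes of $\widehat L_i$, ordered by $\ll$. Define $f_i(x)$ to be the class containing $x$, and $\gamma_i(E)=j(E)$, the common principal coordinate of that class. I will then verify the order-embedding condition and the coloring condition of the equivalent formulation of Definition~\ref{def:covering}.

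\emph{Order embedding.} Suppose $x\le_Q y$. Then $(x,\zero)\le(y,\zero)$ in the product, so $(x,\zero)\le(y,\zero)$ in every $\widehat L_i$. This rules out $y\ll x$, and hence $f_i(x)\le f_i(y)$ in $C_i$.

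Conversely, suppose $x\nleq_Q y$. Then $(x,\zero)$ and $(y,\one)$ are incomparable in $Q\times\QQ^r$, so some $\widehat L_i$ has $(y,\one)<(x,\zero)$. By Lemma~\ref{lem:rational}, this gives $(y,b)<(x,a)$ whenever $\sgn(b-a)=\one$. In particular $\delta_y(v)<\delta_x(u)$ for all $u<v$.
\begin{itemize}
\item This contradicts $x\ll y$.
\item It also contradicts $x\sim y$, since the characterization~\eqref{eq:diagonal} requires $\delta_x(u)<\delta_y(v)$ for $u<v$.
\end{itemize}
Hence $y\ll x$, so $f_i(y)<f_i(x)$ in $C_i$, and $f$ reflects the order.

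\emph{Coloring condition.} Let $x\le_Q y$ and $j\in[r]$. The case $r=1$ is immediate: $x\sim y$ in $\widehat L_1$ by the embedding argument, and the only color is $1$.

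For $r\ge2$, choose $a,b\in\QQ^r$ with $a_j>b_j$ and $a_t<b_t$ for all $t\ne j$. The elements $(x,a)$ and $(y,b)$ are incomparable in $Q\times\QQ^r$: $a\nleq b$, and $b\le a$ fails because $r\ge2$. So some $\widehat L_i$ has $(y,b)<(x,a)$.
\begin{itemize}
\item This excludes $x\ll y$.
\item $y\ll x$ is excluded as in the embedding step.
\end{itemize}
Thus $x\sim y$ in $\widehat L_i$, lying in a common class $E$, so $f_i(x)=f_i(y)$. If $j(E)=t\ne j$, then $a_t<b_t$ and~\eqref{eq:class-principal} would give $(x,a)<(y,b)$, a contradiction. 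Therefore $\gamma_i(f_i(x))=j(E)=j$, as required. Note that the case $x=y$ is covered by the same argument.

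I expect the main obstacle to be this last step. The difficulty is producing, for each color $j$, a single linear order in which $x$ and $y$ share a class whose principal coordinate is exactly $j$. The key is the choice of a sign pattern that is positive in all coordinates except $j$, so that incomparability forces a reversal that only principal coordinate $j$ permits. Lemma~\ref{lem:principal} and Lemma~\ref{lem:classes} are what make that reversal decisive.
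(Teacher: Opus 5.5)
Your proof follows the paper's construction: the same chains $C_i$, maps $f_i$ and colorings $\gamma_i$. For the reflection step you invoke \eqref{eq:diagonal} where the paper invokes \eqref{eq:class-principal}; either works. For $r\ge2$ your test pair $(a,b)$ generalizes the paper's $(e_j,\one-e_j)$, and that part of the argument is correct.

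The step that does not hold as written is the case $r=1$. The claim that $x\sim y$ in $\widehat L_1$ ``by the embedding argument'' is unsupported, and false in general. The embedding step only gives $f_i(x)\le f_i(y)$ for every $i$. Nothing prevents a particular order from placing the whole fiber over $x$ before the whole fiber over $y$, i.e.\ $x\ll y$. An example is the lexicographic order with the $Q$-coordinate first, which has the form required in Lemma~\ref{lem:rational}. What you need is \emph{some} $i$ with $f_i(x)=f_i(y)$, and that requires a reversal argument.

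You split off $r=1$ only because you required $(x,a)$ and $(y,b)$ to be incomparable, which is unnecessary. By the definition of a realizer, every pair $p\nleq q$ is reversed: $q<_{\widehat L_i}p$ for some $i$. So for $r=1$ take $a_1>b_1$; then $(x,a)\nleq(y,b)$, so some $\widehat L_i$ has $(y,b)<(x,a)$. This excludes $x\ll y$, and together with $f_i(x)\le f_i(y)$ it gives $f_i(x)=f_i(y)$; the color is automatically $1$. This is exactly how the paper treats all $r$ uniformly, with $a=e_j$ and $b=\one-e_j$. With this repair your proof is complete.
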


\begin{proof}
For each $i\in[s]$, let $C_i$ be the chain of equivalence classes defined by $\widehat L_i$, let $f_i(x)$ be the class containing $x$, and let $\gamma_i:C_i\to[r]$ assign to each class its principal coordinate.

If $x\le_Q y$, then $f_i(x)\le f_i(y)$ for every $i\in[s]$, since otherwise $(y,\zero)<_{\widehat L_i}(x,\zero)$.
If $x\nleq_Q y$, then $(x,\zero)\nleq(y,\one)$ in $Q\times\QQ^r$, so some $\widehat L_i$ satisfies $(y,\one)<_{\widehat L_i}(x,\zero)$.
Equation~\eqref{eq:class-principal} rules out $f_i(x)=f_i(y)$, while $f_i(x)<f_i(y)$ is impossible by the order of the classes.
Hence $f_i(x)>f_i(y)$.
Therefore $f=(f_1,\ldots,f_s):Q\to C_1\times\cdots\times C_s$ is an order embedding.

Let $x\le_Q y$ and $j\in[r]$, and let $e_j$ be the $j$th standard basis vector.
Since $(x,e_j)\nleq(y,\one-e_j)$, some $\widehat L_i$ satisfies $(y,\one-e_j)<_{\widehat L_i}(x,e_j)$.
As $f_i(x)\le f_i(y)$, we have $f_i(x)=f_i(y)$.
If the principal coordinate of this class were $t\ne j$, then the $t$th coordinates of $e_j$ and $\one-e_j$ would be $0$ and $1$, contradicting~\eqref{eq:class-principal}.
Thus $\gamma_i(f_i(x))=j$.
By Definition~\ref{def:covering}, $Q$ has an $(r,s)$-covering.
\end{proof}

\begin{theorem}\label{thm:characterization}
Let $Q$ be a poset and let $r,s$ be positive integers.
There is a computable integer $N=N(|Q|,r,s)\ge2$ such that the following conditions are equivalent:
\begin{enumerate}
\item\label{char:cover}
$Q$ has an $(r,s)$-covering;
\item\label{char:finite}
$\dim(Q\times[N]^r)\le s$;
\item\label{char:grid}
$\dim(Q\times[n]^r)\le s$ for every integer $n\ge2$;
\item\label{char:uniform}
$\dim(P\times Q)\le\dim P+s-r$ for every poset $P$ with $\dim P\ge r$.
\end{enumerate}
\end{theorem}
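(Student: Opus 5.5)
We prove the cycle of implications (i)$\Rightarrow$(iv)$\Rightarrow$(iii)$\Rightarrow$(ii)$\Rightarrow$(i), taking $N=N(|Q|,r,s)$ to be the computable integer of Lemma~\ref{lem:ramsey}, which is at least $2$.

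For (i)$\Rightarrow$(iv), we apply Proposition~\ref{prop:cover-bound}. If $\dim P\ge r$, then $\max\{\dim P,r\}=\dim P$, and hence $\dim(P\times Q)\le\dim P+s-r$.

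For (iv)$\Rightarrow$(iii), fix $n\ge2$ and take $P=[n]^r$. This is a product of $r$ chains, so its dimension is at most $r$. It is also at least $r$, because the standard example $S_r$ embeds into $[2]^r\subseteq[n]^r$ via the elements $\one-e_j$ and $e_j$. Hence $\dim P=r$, and (iv) gives $\dim(Q\times[n]^r)\le r+s-r=s$.

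For (iii)$\Rightarrow$(ii), specialize (iii) to $n=N$, which is allowed since $N\ge2$.

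For (ii)$\Rightarrow$(i), we use the chain of lemmas just proved. Suppose $\dim(Q\times[N]^r)\le s$, and take a realizer of $Q\times[N]^r$ with exactly $s$ linear extensions, repeating extensions if necessary; this is the setup fixed before Lemma~\ref{lem:rational}. Lemma~\ref{lem:ramsey} provides the sets $A_1,\ldots,A_r$. Lemma~\ref{lem:rational} then yields the sign-invariant realizer of $Q\times\QQ^r$. Lemmas~\ref{lem:principal} and~\ref{lem:classes} give the class structure and the principal coordinates, and Lemma~\ref{lem:extraction} concludes that $Q$ has an $(r,s)$-covering.

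The only point needing care is the lower bound $\dim[n]^r\ge r$ used in (iv)$\Rightarrow$(iii). It follows from the standard example embedding described above. Alternatively, one can observe directly that $[2]^r$ is the Boolean lattice $2^{[r]}$, whose dimension is $r$.
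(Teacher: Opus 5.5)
Your proposal is correct and matches the paper's proof: the same cycle (i)$\Rightarrow$(iv)$\Rightarrow$(iii)$\Rightarrow$(ii)$\Rightarrow$(i), the same $N$ from Lemma~\ref{lem:ramsey}, and the same incomparable pairs for $\dim[n]^r\ge r$ (the paper's $a_j$ and $b_j=3\one-a_j$ are your $e_j$ and $\one-e_j$ shifted into $\{1,2\}^r$). One small nit: for $r=2$ these elements coincide in pairs, so $S_2$ is not literally embedded, but the argument that no linear extension reverses two of the pairs $(e_j,\one-e_j)$ still works, and the cases $r\le2$ are trivial anyway.
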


\begin{proof}
Let $N=N(|Q|,r,s)$ be the integer in Lemma~\ref{lem:ramsey}.
By Proposition~\ref{prop:cover-bound}, \ref{char:cover} implies \ref{char:uniform}.

Let $n\ge2$.
Since $[n]^r$ is a product of $r$ chains, $\dim([n]^r)\le r$.
For $r\ge2$ and $j\in[r]$, let $a_j\in\{1,2\}^r$ have coordinate $j$ equal to $2$ and all other coordinates equal to $1$, and let $b_j=3\one-a_j$.
Then $a_j\nleq b_j$ for every $j$, while $a_j\le b_t$ whenever $j\ne t$.
No linear extension can place both $b_j$ before $a_j$ and $b_t$ before $a_t$ for distinct $j,t$.
Thus $\dim([n]^r)\ge r$.
The case $r=1$ is immediate, and hence $\dim([n]^r)=r$ for every $n\ge2$.

Suppose that \ref{char:uniform} holds.
For $P=[n]^r$, we have $\dim(Q\times[n]^r)\le r+s-r=s$ for every $n\ge2$.
Hence \ref{char:grid} holds.

Since $N\ge2$, condition~\ref{char:grid} implies \ref{char:finite}.

Suppose that \ref{char:finite} holds.
By Lemma~\ref{lem:extraction}, $Q$ has an $(r,s)$-covering.
Hence \ref{char:finite} implies \ref{char:cover}.
\end{proof}


\begin{thebibliography}{99}
\small
\raggedright
\setlength{\itemsep}{3pt}

\bibitem{Baker}
K. A. Baker,
Dimension, join-independence, and breadth in partially ordered sets,
unpublished manuscript, 1961.

\bibitem{Bergman1}
G. M. Bergman,
Some frustrating questions on dimensions of products of posets,
\emph{Discrete Math.} \textbf{349} (2026), no.~6, 115002.

\bibitem{Bergman2}
G. M. Bergman,
Further thoughts on dimensions of posets,
preprint, 2026,
\href{https://arxiv.org/abs/2607.13385}{arXiv:2607.13385}.

\bibitem{Blake1}
H. S. Blake, J. Hodor, P. Micek, M. T. Seweryn and W. T. Trotter,
Planarity and dimension I,
preprint, 2025,
\href{https://arxiv.org/abs/2510.18603}{arXiv:2510.18603}.

\bibitem{Blake2}
H. S. Blake, J. Hodor, P. Micek, M. T. Seweryn and W. T. Trotter,
Planarity and dimension II,
preprint, 2026,
\href{https://arxiv.org/abs/2607.09294}{arXiv:2607.09294}.

\bibitem{BMMV}
A. E. Brouwer, C. F. Mills, W. H. Mills and A. Verbeek,
Counting families of mutually intersecting sets,
\emph{Electron. J. Combin.} \textbf{20} (2013), no.~2, Paper~P8, 8~pp.

\bibitem{DW}
Z. Dong and K. Wang,
The Kelly--Trotter product conjecture for posets of dimension three,
preprint, 2026,
\href{https://arxiv.org/abs/2608.14434}{arXiv:2608.14434}.

\bibitem{DM}
B. Dushnik and E. W. Miller,
Partially ordered sets,
\emph{Amer. J. Math.} \textbf{63} (1941), 600--610.

\bibitem{Dushnik}
B. Dushnik,
Concerning a certain set of arrangements,
\emph{Proc. Amer. Math. Soc.} \textbf{1} (1950), no.~6, 788--796.

\bibitem{FMW}
S. Felsner, T. M\"utze and M. Wittmann,
Order dimension, grids, and products,
\emph{Order} \textbf{42} (2025), 811--827.

\bibitem{Hansel}
G. Hansel,
Sur le nombre des fonctions bool\'eennes monotones de $n$ variables,
\emph{C. R. Acad. Sci. Paris S\'er. A} \textbf{262} (1966), 1088--1090.

\bibitem{HM}
S. Ho\c{s}ten and W. D. Morris, Jr.,
The order dimension of the complete graph,
\emph{Discrete Math.} \textbf{201} (1999), 133--139.

\bibitem{Janzer}
B. Janzer,
A note on the orientation covering number,
\emph{Discrete Appl. Math.} \textbf{304} (2021), 349--351.

\bibitem{Joret}
G. Joret, P. Micek, M. Pilipczuk and B. Walczak,
Cliquewidth and dimension,
\emph{Proc. Lond. Math. Soc.} (3) \textbf{132} (2026), no.~1, e70116.

\bibitem{KT}
D. Kelly and W. T. Trotter,
Dimension theory for ordered sets,
in \emph{Ordered Sets},
I. Rival (ed.),
D. Reidel, Dordrecht, 1982, 171--211.

\bibitem{KnauerTrotter}
K. Knauer and W. T. Trotter,
Concepts of dimension for convex geometries,
\emph{SIAM J. Discrete Math.} \textbf{38} (2024), no.~2, 1566--1585.

\bibitem{KozikMicekTrotter}
J. Kozik, P. Micek and W. T. Trotter,
Dimension is polynomial in height for posets with planar cover graphs,
\emph{J. Combin. Theory Ser. B} \textbf{165} (2024), 164--196.

\bibitem{Reuter}
K. Reuter,
On the dimension of the Cartesian product of relations and orders,
\emph{Order} \textbf{6} (1989), 277--293.

\bibitem{Schnyder}
W. Schnyder,
Planar graphs and poset dimension,
\emph{Order} \textbf{5} (1989), 323--343.

\bibitem{ScottWood}
A. Scott and D. R. Wood,
Better bounds for poset dimension and boxicity,
\emph{Trans. Amer. Math. Soc.} \textbf{373} (2020), no.~3, 2157--2172.

\bibitem{Spencer}
J. Spencer,
Minimal scrambling sets of simple orders,
\emph{Acta Math. Acad. Sci. Hungar.} \textbf{22} (1971), 349--353.

\bibitem{Trotter}
W. T. Trotter,
The dimension of the Cartesian product of partial orders,
\emph{Discrete Math.} \textbf{53} (1985), 255--263.

\bibitem{Tsai}
S.-F. Tsai,
A simple upper bound on the number of antichains in $[t]^n$,
\emph{Order} \textbf{36} (2019), 507--510.

\bibitem{Wiedemann}
D. Wiedemann,
A computation of the eighth Dedekind number,
\emph{Order} \textbf{8} (1991), 5--6.

\end{thebibliography}
\end{document}